\def\1{\bm{1}}
\DeclareMathAlphabet{\mathsfit}{\encodingdefault}{\sfdefault}{m}{sl}
\SetMathAlphabet{\mathsfit}{bold}{\encodingdefault}{\sfdefault}{bx}{n}
\def\gO{{\mathcal{O}}}
\newcommand{\E}{\mathbb{E}}
\newcommand{\R}{\mathbb{R}}
\theoremstyle{plain}
\newtheorem{theorem}{Theorem}
\newtheorem{lemma}{Lemma}
\newtheorem{assumption}{Assumption}
\theoremstyle{definition}
\newtheorem{definition}{Definition}
\newtheorem{remark}{Remark}
\title{Faster stochastic cubic regularized Newton methods with momentum}
\author{Yiming Yang \thanks{Department of Mathematics, Xiangtan University, China (email: {\tt yiming9780@outlook.com,pzheng@xtu.edu.cn}).} 
\and
Chuan He \thanks{Department of Mathematics, Link\"oping University, Sweden (email: {\tt chuan.he@liu.se}).}
\and
Xiao Wang \thanks{School of Computer Science and Engineering, Sun Yat-sen University, China (email: {\tt wangx936@mail.sysu.edu.cn}).}
\and
Zheng Peng $^*$
}
\date{
\today
}
\begin{document}
\maketitle

\begin{abstract}
Cubic regularized Newton (CRN) methods have attracted significant research interest because they offer stronger solution guarantees and lower iteration complexity. With the rise of the big-data era, there is growing interest in developing stochastic cubic regularized Newton (SCRN) methods that do not require exact gradient and Hessian evaluations. In this paper, we propose faster SCRN methods that incorporate gradient estimation with small, controlled errors and Hessian estimation with momentum-based variance reduction. These methods are particularly effective for problems where the gradient can be estimated accurately and at low cost, whereas accurate estimation of the Hessian is expensive. Under mild assumptions, we establish the iteration complexity of our SCRN methods by analyzing the descent of a novel potential sequence. Finally, numerical experiments show that our SCRN methods can achieve comparable performance to deterministic CRN methods and vastly outperform first-order methods in terms of both iteration counts and solution quality.

\end{abstract}

\noindent {\small\textbf{Keywords:} Stochastic cubic regularized Newton methods, variance reduction, momentum, iteration complexity}

\medskip

\noindent {\small\textbf{Mathematics Subject Classification:} 49M15, 90C25, 90C30 }

\section{Introduction}\label{sec:intro}
In this paper, we consider the smooth unconstrained optimization problem
\begin{align}\label{ucpb}
\min_{x\in\R^n}f(x),
\end{align}
where $f:\R^n\rightarrow \R$ is twice continuously differentiable. We assume that problem \eqref{ucpb} has at least one optimal solution. Over the past few years, second-order methods have gained popularity for handling problem \eqref{ucpb} due to their ability to converge in fewer iterations than first-order methods and to deliver higher solution quality. However, the computational overhead incurred per evaluation of the Hessian matrix hinders the scalability of second-order methods in modern large-scale settings. To better leverage second-order information in these settings, this paper aims to propose practical second-order methods---particularly variants of the cubic regularized Newton (CRN) method---to solve problem \eqref{ucpb} and analyze their iteration complexity for finding an approximate second-order stationary point (SOSP) of \eqref{ucpb}.


Second-order methods have recently received considerable attention for their strong solution guarantees and rapid convergence, with substantial progress made in designing new second-order methods with complexity guarantees for solving problem \eqref{ucpb}. In particular, CRN methods \cite{agarwal2017finding,carmon2019gradient,cartis2011adaptive,nesterov2006cubic}, trust-region methods \cite{curtis2021trust,curtis2017trust,martinez2017cubic}, second-order line-search method \cite{royer2018complexity}, inexact regularized Newton method \cite{curtis2019inexact}, quadratic regularization method \cite{birgin2017use}, and Newton-CG methods \cite{he2025newton,he2023newton,royer2020newton} were developed for finding an $(\epsilon,\sqrt{\epsilon})$-SOSP $x$ of problem \eqref{ucpb} satisfying 
\begin{align*}
\|\nabla f(x)\|\le \epsilon,\qquad \lambda_{\min}(\nabla^2 f(x)) \ge -\sqrt{\epsilon},
\end{align*}
where $\epsilon\in(0,1)$ is a tolerance parameter and $\lambda_{\min}(\cdot)$ denotes the minimum eigenvalue of the associated matrix. Under suitable assumptions, it was shown that these methods achieve an iteration complexity of $\mathcal{O}(\epsilon^{-3/2})$ for finding an $(\epsilon,\sqrt{\epsilon})$-SOSP, which has been proven to be optimal in \cite{carmon2020lower,cartis2018worst}. Besides, several gradient-based methods with random perturbations (e.g., \cite{allen2018neon2, jin2018accelerated, xu2017neon+}) have also been developed to find an $(\epsilon,\sqrt{\epsilon})$-SOSP with high probability.

Despite the significant advantages of second-order methods, their high per-iteration cost limits their use in large-scale problems. To address this limitation, many research works have focused on developing inexact and stochastic variants of second-order methods. In particular, inexact and stochastic versions of CRN methods \cite{bellavia2021adaptive,cartis2011adaptive,doikov2025first,grapiglia2022cubic,kohler2017sub,tripuraneni2018stochastic,wang2019note,xu2020newton}, trust-region method \cite{xu2020newton}, Newton-CG method \cite{yao2023inexact}, and subsampling line-search method \cite{bergou2022subsampling}, have been developed to seek an approximate SOSP of problem~\eqref{ucpb}. These methods achieve a similar order of complexity bounds as their exact variants. 
Yet, in each iteration, these methods require a fairly accurate approximation of the gradient and Hessian with small errors depending on the desired tolerance $\epsilon$ (potentially after a certain number of iterations), which remains quite restrictive in practice.

Furthermore, several recent works \cite{chayti2024improving,jaggiunified,wang2019stochastic,zhang2022adaptive,zhou2019stochastic} have developed stochastic second-order methods leveraging variance reduction techniques. These methods apply variance reduction to iteratively correct estimation errors using stochastic derivative information from previous iterations, thereby avoiding the need to construct accurate derivative estimates at each step. Such desirable features enable these methods to maintain low per-iteration costs, making them more practical for large-scale problems. Among these works, \cite{chayti2024improving} is the only one that do not assuming a finite-sum structure for the objective function. This work proposed two methods that achieve iteration complexity bounds of $\mathcal{O}(\epsilon^{-7/2})$ and $\mathcal{O}(\epsilon^{-10/3})$, respectively, for finding an approximate stochastic stationary point $x$ satisfying $\mathbb{E}[\|\nabla f(x)\|] \le \epsilon$. However, these complexity bounds leave a significant gap compared to the bound of $\mathcal{O}(\epsilon^{-3/2})$ achieved by the deterministic CRN method; moreover, they are worse than the iteration complexity of $\mathcal{O}(\epsilon^{-3})$ achieved by stochastic first-order methods (e.g., \cite{cutkosky2019momentum,fang2018spider,li2021page}).

Motivated by the aforementioned discussions, we aim to rethink the design of stochastic second-order methods and identify the types of stochastic optimization problems for which they are preferable. In this paper, we focus on a class of problems where the gradient can be estimated relatively easily with small errors (see Assumption \ref{asp:basic}(c)), but estimating the Hessian is more expensive; therefore, only unbiased stochastic Hessian estimators with bounded moments (see Assumption \ref{asp:basic}(d)) are available at each step. Specifically, we propose two new variants of SCRN methods for solving such problems. Under mild assumptions, we establish their iteration complexity for finding an $(\epsilon_g,\epsilon_H)$-stochastic second-order stationary point (SSOSP) of problem \eqref{ucpb} (see Definition \ref{def:ssosp}), based on an analysis of the descent of a novel potential sequence (see \eqref{def:pot}). For ease of comparison, we summarize the iteration complexity, the number of samples per iteration, smoothness conditions, and stationary measures for vanilla gradient descent, stochastic first-order methods, CRN, variants of SCRN, and our methods for nonconvex optimization in Table \ref{table:sum-ic}.

\begin{table}[htbp]
\centering
\caption{Comparison of vanilla gradient descent (GD), stochastic gradient methods with momentum (SGD-M), CRN, SCRN, and SCRN with momentm (SCRN-M) in terms of iteration complexity, the number of samples per iteration, smoothness conditions, and stationary measures.}
\smallskip
\renewcommand{\arraystretch}{1.2} 
\resizebox{\textwidth}{!}{
\begin{tabular}{c|c|c|c|c}
\hline
\multicolumn{5}{c}{First-order methods}\\
\hline
& \multirow{2}{*}{iteration complexity} & gradient samples & \multirow{2}{*}{smoothness condition} & \multirow{2}{*}{stationary measure}\\
&  & per iteration & & \\
\hline
GD & $\mathcal{O}(\epsilon^{-2})$ & --- & $\nabla f$ Lipschitz & $\|\nabla f(x)\|\le\epsilon$ \\
SGD \cite{ghadimi2013stochastic} & $\mathcal{O}(\epsilon^{-4})$ & 1 & $\nabla f$ Lipschitz & $\E[\|\nabla f(x)\|^2]\le\epsilon^2$ \\
SGD-M \cite{cutkosky2020momentum,he2025complexity} & $\widetilde{\mathcal{O}}(\epsilon^{-{(3p+1)/p}})$ & 1 & $\mathcal{D}^p f$ Lipschitz\footnotemark & $\E[\|\nabla f(x)\|]\le\epsilon$ \\
SGD-M \cite{cutkosky2019momentum} & $\mathcal{O}(\epsilon^{-3})$  & 1 & $G$ average Lipschitz & $\E[\|\nabla f(x)\|^2]\le\epsilon^2$ \\
\hline
\multicolumn{5}{c}{Second-order methods}\\
\hline
& \multirow{2}{*}{iteration complexity} & Hessian samples & \multirow{2}{*}{smoothness condition} & \multirow{2}{*}{stationary measure}\\
&  & per iteration & & \\
\hline
CRN \cite{nesterov2006cubic} & $\mathcal{O}(\max\{\epsilon_g^{-3/2},\epsilon_H^{-3}\})$ & --- & $\nabla^2 f$ Lipschitz & $\|\nabla f(x)\|\le\epsilon_g$, $\lambda_{\min}(\nabla^2 f(x))\ge-\epsilon_H$ \\
SCRN \cite{tripuraneni2018stochastic} & $\mathcal{O}(\epsilon^{-3/2})$ & $\widetilde{\mathcal{O}}(\epsilon^{-1})$ & $\nabla f, \nabla^2 f$ Lipschitz & $\|\nabla f(x)\|\le\epsilon$, $\lambda_{\min}(\nabla^2 f(x))\ge-\sqrt{\epsilon}\ \ {\mathrm{w.h.p.}}$ \\
SCRN-M \cite{chayti2024improving} & $\mathcal{O}(\epsilon^{-7/2})$  & 1 & $\nabla^2 f$ Lipschitz & $\E[\|\nabla f(x)\|^{3/2}]\le\epsilon^{3/2}$ \\
Algorithm \ref{alg:unf-ssom-pm} (ours) &  $\mathcal{O}(\max\{\epsilon_g^{-7/4},\epsilon_H^{-7}\})$  & 1 & $\nabla^2 f$ Lipschitz & {$\E[\|\nabla f(x)\|^{3/2}]\le\epsilon_g^{3/2}$, $\E[\lambda_{\min}(\nabla^2 f(x))^3]\ge-\epsilon_H^3$} \\
Algorithm \ref{alg:unf-ssom-rm} (ours) & $\mathcal{O}(\max\{\epsilon_g^{-5/3}, \epsilon_H^{-5}\})$  & 1 & $H$ average Lipschitz & {$\E[\|\nabla f(x)\|^{3/2}]\le\epsilon_g^{3/2}$, $\E[\lambda_{\min}(\nabla^2 f(x))^3]\ge-\epsilon_H^3$} \\
\hline
\end{tabular}
}
\label{table:sum-ic}
\end{table}

The main contributions of this paper are highlighted below.
\begin{itemize}
    \item We propose two new SCRN methods (Algorithms \ref{alg:unf-ssom-pm} and \ref{alg:unf-ssom-rm}), which adopt stochastic gradients with small errors and incorporate momentum-based variance reduction for estimating Hessian. Under mild assumptions, we establish their iteration complexity based on an analysis of the descent of a novel potential sequence. To the best of our knowledge, the obtained complexity bounds are new to the literature.

    \item We conduct numerical experiments (Section \ref{sec:experiments}) to compare our SCRN methods with deterministic CRN, other SCRN variants, and first-order methods. The numerical results show that our methods achieve performance comparable to deterministic CRN methods and significantly outperform other SCRN variants and first-order methods.

\end{itemize}

The remainder of this paper is organized as follows. In Section \ref{sec:nap}, we introduce the notation and assumptions used throughout the paper. Sections \ref{sec:scn-pm} and \ref{sec:scn-rm} present two new variants of SCRN and establish their iteration complexity. Section \ref{sec:experiments} reports preliminary numerical results. Finally, Section~\ref{sec:proof} provides the proofs of the main results.

\footnotetext{$G$ and $\mathcal{D}^p f$ represent the stochastic gradient and the $p$th-order derivative of $f$, respectively.}

\section{Notation and assumptions}\label{sec:nap}
Throughout this paper, we use $\R^n$ to denote the $n$-dimensional Euclidean space endowed with the standard inner product $\langle\cdot,\cdot\rangle$. We let $\|\cdot\|$ denote the Euclidean norm for vectors and the spectral norm for matrices, and use $\|\cdot\|_F$ to denote the Frobenius norm for matrices. For a given matrix $H\in\R^{n\times n}$, we use $\lambda_{\min}(H)$ to denote its minimum eigenvalue, and use $\mathrm{Tr}(H)$ to denote the trace of $H$. We let $I$ be the $n\times n$ identity matrix. In addition, we use $\widetilde{\gO}(\cdot)$ to represent $\gO(\cdot)$ with polylogarithmic terms omitted.


We now make the following assumptions throughout this paper.

\begin{assumption}\label{asp:basic}
\begin{enumerate}[{\rm (a)}]
\item There exists a finite $f_{\mathrm{low}}$ such that $f(x)\ge f_{\mathrm{low}}$ for all $x\in\R^n$.
\item There exist $L>0$ and $L_F>0$ such that $\|\nabla^2 f(y) - \nabla^2 f(x)\|\le L\|y-x\|$ and $\|\nabla^2 f(y) - \nabla^2 f(x)\|_F\le L_F\|y-x\|$ hold for all $x,y\in\R^n$.
\item For any $\delta\in(0,1)$, we have access to a stochastic gradient estimator $G_{\delta}:\mathbb{R}^n\times \mathcal{Z}\to\R^n$ satisfying 
\begin{align}\label{ineq:G-var}
\E_\zeta[\|G_\delta(x;\zeta) - \nabla f(x)\|^{3/2}] \le \delta^{3/2} \qquad\forall x\in\R^n.    
\end{align}
\item We have access to a stochastic Hessian estimator $H:\mathbb{R}^n\times\Xi\to\R^{n \times n}$ satisfying 
\begin{align}\label{ineq:H-Variance}
\mathbb{E}_\xi[H(x;\xi)] = \nabla^2 f(x),\quad \mathbb{E}_\xi[\|H(x;\xi) - \nabla^2 f(x)\|_F^3] \le \sigma^3\qquad\forall x\in\R^n   
\end{align}
for some $\sigma>0$.
\end{enumerate}    
\end{assumption}

\begin{remark}
{\it (i)
Assumptions \ref{asp:basic}(a) and \ref{asp:basic}(b) are common in the literature on CRN methods (e.g., see \cite{nesterov2006cubic,zhang2022adaptive}). It follows from Assumption \ref{asp:basic}(b) that
\begin{align}\label{ineq:1st-desc}
&\|\nabla f(y) - \nabla f(x) - \nabla^2 f(x)(y-x)\| \le \frac{L}{2}\|y-x\|^2 \quad \forall x,y\in\R^n,\\
&f(y) \le f(x) + \nabla f(x)^T(y-x) + \frac{1}{2}(y-x)^T\nabla^2f(x)(y-x) + \frac{L}{6}\|y-x\|^3\quad \forall x,y\in\R^n.\label{ineq:2st-desc}
\end{align}
In addition, Assumption \ref{asp:basic}(c) states that $G_\delta(\cdot;\xi)$ approximates the true gradient $\nabla f(\cdot)$ to any desired accuracy in expectation, while Assumption \ref{asp:basic}(d) states that the stochastic Hessian $H(\cdot;\xi)$ is an unbiased estimator of $\nabla^2 f(\cdot)$ and has a bounded third-order central moment. 

(ii) We made two Lipschitz continuity assumptions on $\nabla^2 f$ in Assumption \ref{asp:basic}(b). In particaulr, the Lipschitz continuity with respect to the spectral norm is used to estimate the reduction of function values at each iteration of our SCRN methods (see  Lemma \ref{lem:ppt-desc} below and the classical analysis in \cite{nesterov2006cubic}). In comparison, the Lipschitz continuity with respect to the Frobenius norm is used to derive a recursive relation for the decreasing estimation error given by the momentum update (see Lemmas \ref{lem:rec-Hes-pm} and \ref{lem:rec-Hes-rm}). In our complexity analysis, we found that within the Schatten family of matrix norms, only the Frobenius norm seems effective for analyzing Hessian estimation error with momentum updates, while other norms, such as the spectral and nuclear norms, do not appear to be useful. Our explanation is that for any Schatten-$p$ norm $\|\cdot\|_{S_p}$, our analysis requires the norm $\|\cdot\|_{S_p}$ to be continuously differentiable. However, this condition is satisfied only when $p=2$, which corresponds to the Frobenius norm.
}
\end{remark}

We next introduce the definition of an approximate SSOSP, which our methods aim to achieve.
\begin{definition}\label{def:ssosp}
For any $\epsilon_g,\epsilon_H\in(0,1)$, we say that $x\in\R^n$ is an $(\epsilon_g,\epsilon_H)$-stochastic second-order stationary point (SSOSP) of problem~\eqref{ucpb} if it satisfies $\E[\|\nabla f(x)\|^{3/2}] \le\epsilon_g^{3/2}$ and $\E[\lambda_{\min}(\nabla^2 f(x))^3]\ge-\epsilon_H^3$.
\end{definition}

\section{SCRN with Polyak momentum}\label{sec:scn-pm}

In this section, we propose an SCRN method with Polyak momentum, and then establish its iteration complexity under Assumption \ref{asp:basic}.

Specifically, our SCRN method with Polyak momentum generate three sequences, $\{g^k\}$, $\{M_k\}$, and $\{x^k\}$. At the $k$th iteration, this method first computes $g^k$ as a stochastic gradient of $f$ at $x^k$ with error $\delta_k$, and then computes $M_k$ as a weighted average of the stochastic Hessians evaluated at $x^0,\ldots,x^k$. The next iterate $x^{k+1}$ is obtained by solving a cubic regularized Newton subproblem. Details of this method are described in Algorithm~\ref{alg:unf-ssom-pm}, with a specific choice of input parameters given in Theorem \ref{th:complexity-pm-c}.

\begin{algorithm}[!htbp]
\caption{SCRN with Polyak momentum}
\label{alg:unf-ssom-pm}
\begin{algorithmic}[0]
\State \textbf{Input:} starting point $x^0\in\R^n$, regularization parameters $\{\eta_k\}\subset(0,\infty)$, error parameters $\{\delta_k\}\subset(0,1)$, momentum parameters $\{\theta_k\}\subset(0,1)$.
\State \textbf{Initialize:} $M_{-1}=0$ and $\theta_{-1}=1$.
\For{$k=0,1,2,\ldots$}
\State Construct the gradient and Hessian estimators:
\begin{align}\label{update-Mk-pm}
g^k = G_{\delta_k}(x^k;\zeta^{k}),\quad M_k = (1 - \theta_{k-1}) M_{k-1} + \theta_{k-1} H(x^k;\xi^{k}).
\end{align}
\State Update the next iterate:
\begin{align*}
x^{k+1} \in \underset{x \in \mathbb{R}^{n}}{\mathrm{Arg\,min}} \Big\{ (g^k)^T (x-x^k) + \frac{1}{2}(x-x^k)^TM_k(x-x^k) + \frac{1}{6\eta_k}\|x-x^k\|^3\Big\}.
\end{align*}
\EndFor
\end{algorithmic}
\end{algorithm}

The following theorem establishes the iteration complexity of Algorithm \ref{alg:unf-ssom-pm} for computing an $(\epsilon_g,\epsilon_H)$-SSOSP of problem \eqref{ucpb}. Its proof is provided in Section \ref{subsec:proof-pm}.

\begin{theorem}\label{th:complexity-pm-c}
Suppose that Assumption \ref{asp:basic} holds. Define 
\begin{align}\label{def:Mc}
M_{\mathrm{pm}}:= 54\Big(f(x^0)-f_{\mathrm{low}} + \sigma^3L_{F}^{-2} + L_{F}^{3/2}\sigma^3 + 1\Big),
\end{align}
where $f_{\mathrm{low}}$, $L_F$, and $\sigma$ be given in Assumption \ref{asp:basic}. Let $\{x^{k}\}$ be all iterates generated by Algorithm \ref{alg:unf-ssom-pm} with input parameters $\{(\eta_k,\theta_k,\delta_k)\}$ given by
\begin{align}\label{c-para-pm}
\eta_k=\frac{1}{9K^{2/7}},\quad \theta_k=\frac{7L_F}{3K^{2/7}}, \quad \delta_k=\frac{1}{9K^{4/7}},\quad\forall k\ge0.
\end{align}
Then, for any $\epsilon_g,\epsilon_H\in(0,1)$, $x^{\iota_K}$ is an $(\epsilon_g,\epsilon_H)$-SSOSP of problem \eqref{ucpb} for all $K$ satisfying
\begin{align}\label{stat-fix-pm}
K\ge \max\bigg\{\Big(\frac{(3M_{\mathrm{pm}})^{2/3}}{\epsilon_g}\Big)^{7/4},\Big(\frac{(108M_{\mathrm{pm}})^{1/3}}{\epsilon_H}\Big)^7,\Big(\frac{2L}{9}\Big)^{7/2},\Big(\frac{7L_F}{3}\Big)^{7/2},1\bigg\},
\end{align}
where $\iota_K$ is uniformly drawn from $\{1,\ldots,K\}$. 
\end{theorem}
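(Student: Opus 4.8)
The plan is to combine a per-iteration cubic-regularization descent estimate with a momentum-driven recursion for the Hessian estimation error, glue the two into the potential sequence \eqref{def:pot}, telescope, and finally read off the two stationarity measures by averaging over the uniform index $\iota_K$. I would begin with a single step. Writing $s^k=x^{k+1}-x^k$ and $r_k=\|s^k\|$, the global minimizer of the cubic subproblem satisfies the first-order condition $g^k+M_ks^k+\frac{r_k}{2\eta_k}s^k=0$ and the second-order condition $M_k+\frac{r_k}{2\eta_k}I\succeq 0$. Substituting the first-order condition into the cubic upper bound \eqref{ineq:2st-desc} and using the second-order condition to control $(s^k)^TM_ks^k$ yields $f(x^{k+1})\le f(x^k)-\frac{r_k^3}{4\eta_k}+\frac{L}{6}r_k^3+(\nabla f(x^k)-g^k)^Ts^k-\frac12(s^k)^T(M_k-\nabla^2 f(x^k))s^k$. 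The two error terms are split by Young's inequality with conjugate exponents $3$ and $3/2$ (matching the $3/2$-moment in Assumption \ref{asp:basic}(c)), absorbing part of $r_k^3$ and leaving a residual $-c\,r_k^3/\eta_k$ together with $\eta_k^{1/2}\|\nabla f(x^k)-g^k\|^{3/2}$ and $\eta_k^2\|M_k-\nabla^2 f(x^k)\|_F^3$; taking expectations and invoking \eqref{ineq:G-var} gives Lemma \ref{lem:ppt-desc}. The condition $K\ge(2L/9)^{7/2}$ is exactly what makes $\frac{L}{6}r_k^3$ dominated by $\frac{1}{\eta_k}r_k^3$. In the same step I would record, from the optimality conditions and \eqref{ineq:1st-desc}, the pointwise bounds $\|\nabla f(x^{k+1})\|\le\frac{L}{2}r_k^2+\|\nabla f(x^k)-g^k\|+\frac{r_k^2}{2\eta_k}+\|E_k\|_F r_k$ and $\lambda_{\min}(\nabla^2 f(x^{k+1}))\ge-(Lr_k+\frac{r_k}{2\eta_k}+\|E_k\|_F)$, where $E_k:=M_k-\nabla^2 f(x^k)$.

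The heart of the argument is the recursion for $\E\|E_k\|_F^3$ (Lemma \ref{lem:rec-Hes-pm}). Here I would decompose $E_k=(1-\theta_{k-1})\big(E_{k-1}+(\nabla^2 f(x^{k-1})-\nabla^2 f(x^k))\big)+\theta_{k-1}\big(H(x^k;\xi^k)-\nabla^2 f(x^k)\big)$, separate the $\mathcal F_{k-1}$-measurable part $U$ from the conditionally mean-zero noise $V=\theta_{k-1}(H(x^k;\xi^k)-\nabla^2 f(x^k))$, and expand $\|U+V\|_F^3$ by a second-order Taylor estimate of the map $A\mapsto\|A\|_F^3$. Since this map is $C^1$ with Hessian operator norm at most $6\|A\|_F$ — the reason only the Frobenius norm works, as noted in the Remark — the linear term vanishes in conditional expectation and I am left with $\E[\|E_k\|_F^3\mid\mathcal F_{k-1}]\le\|U\|_F^3+3\|U\|_F\,\E\|V\|_F^2+3\,\E\|V\|_F^3$. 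Bounding $\|U\|_F\le(1-\theta_{k-1})(\|E_{k-1}\|_F+L_Fr_{k-1})$ via Assumption \ref{asp:basic}(b), using $\E\|V\|_F^2\le\theta_{k-1}^2\sigma^2$ and $\E\|V\|_F^3\le\theta_{k-1}^3\sigma^3$ from \eqref{ineq:H-Variance}, and applying $(p+q)^3\le(1+\tau)^2p^3+(1+\tau^{-1})^2q^3$ with $\tau\asymp\theta_{k-1}$ together with one further Young step on the cross term, I would obtain a contraction of the form $\E\|E_k\|_F^3\le(1-c\theta_{k-1})\E\|E_{k-1}\|_F^3+C_1L_F^3\theta_{k-1}^{-2}\E r_{k-1}^3+C_2\theta_{k-1}^{5/2}\sigma^3$.

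With these ingredients I would form the potential $\Phi_k=\E[f(x^k)]+\beta\,\E\|E_k\|_F^3$ with $\beta\asymp\eta^2/\theta$ chosen so that the $+\eta^2\E\|E_k\|_F^3$ term from the descent is cancelled by the $-\beta c\theta\,\E\|E_k\|_F^3$ from the recursion, while the values $\eta_k=\frac19K^{-2/7}$ and $\theta_k=\frac{7L_F}{3}K^{-2/7}$ are tuned precisely so that the $r_k^3$-coefficient $-\frac{1}{12\eta_k}+\beta C_1L_F^3\theta_k^{-2}$ remains negative; this gives $\Phi_{k+1}-\Phi_k\le-\frac{1}{24\eta}\E r_k^3+C\eta^{1/2}\delta^{3/2}+\beta C_2\theta^{5/2}\sigma^3$. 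Summing over $k=0,\dots,K-1$, using $\Phi_K\ge f_{\mathrm{low}}$ and $\E\|E_0\|_F^3\le\sigma^3$ (since $\theta_{-1}=1$ forces $M_0=H(x^0;\xi^0)$), and checking that $K\eta^{1/2}\delta^{3/2}=\mathcal O(1)$ and $K\beta\theta^{5/2}\sigma^3=\mathcal O(1)$ under \eqref{c-para-pm}, yields $\frac1K\sum_k\E r_k^3=\mathcal O(\eta/K)$ and, after summing the Hessian recursion separately, $\frac1K\sum_k\E\|E_k\|_F^3=\mathcal O(K^{-3/7})$. Averaging the pointwise bounds over $\iota_K$ uniform on $\{1,\dots,K\}$, using $(\sum_{i=1}^4a_i)^{3/2}\le 2\sum a_i^{3/2}$, $(a+b+c)^3\le 9(a^3+b^3+c^3)$, Cauchy--Schwarz on the mixed term $\|E_k\|_F^{3/2}r_k^{3/2}$, and $\lambda_{\min}^3\ge-c^3$ whenever $\lambda_{\min}\ge-c$, then produces $\E\|\nabla f(x^{\iota_K})\|^{3/2}=\mathcal O(K^{-6/7})$ and $\E[\lambda_{\min}(\nabla^2 f(x^{\iota_K}))^3]\ge-\mathcal O(K^{-3/7})$, which fall below $\epsilon_g^{3/2}$ and above $-\epsilon_H^3$ exactly for $K$ as in \eqref{stat-fix-pm}.

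The main obstacle I anticipate is the third-moment Hessian recursion: extracting a genuine contraction factor $1-c\theta$ (rather than $1-c\theta^2$) from $\|U+V\|_F^3$ hinges on the $C^1$ Taylor expansion of $\|\cdot\|_F^3$ and a careful Young split of the cross term $\|U\|_F\,\E\|V\|_F^2$, and the exponent $5/2$ appearing on the residual noise $\theta^{5/2}\sigma^3$ is precisely what forces the $K^{-2/7}$ scaling of $\eta_k,\theta_k$ and hence the $\mathcal O(\max\{\epsilon_g^{-7/4},\epsilon_H^{-7}\})$ rate. The second delicate point is bookkeeping: verifying that the specific constants in \eqref{c-para-pm} — in particular the factor $L_F$ inside $\theta_k$ — keep the $\|E_k\|_F^3$- and $r_k^3$-coefficients of $\Phi_{k+1}-\Phi_k$ simultaneously non-positive, and that the constant $M_{\mathrm{pm}}$ in \eqref{def:Mc} indeed dominates $\Phi_0-f_{\mathrm{low}}$ plus all accumulated error.
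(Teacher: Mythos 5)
Your proposal is correct and follows essentially the same route as the paper: the same cubic-subproblem descent estimate, the same third-moment Frobenius-norm expansion giving the $(1-c\theta)$ contraction with $\theta^{5/2}\sigma^3$ residual, the same potential $f(x^k)+p_k\|M_k-\nabla^2 f(x^k)\|_F^3$ with $p_k\asymp\eta/L_F$, and the same telescoping and averaging over $\iota_K$. The only (immaterial) difference is that the paper folds the stationarity measure $\mu_\eta(x^{k+1})$ into the potential descent inequality before telescoping, whereas you telescope to bound $\sum_k\E\|x^{k+1}-x^k\|^3$ and $\sum_k\E\|M_k-\nabla^2 f(x^k)\|_F^3$ first and substitute into the pointwise stationarity bounds afterwards.
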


\begin{remark}
{\it From Theorem \ref{th:complexity-pm-c}, we see that Algorithm \ref{alg:unf-ssom-pm} with input parameters given by \eqref{c-para-pm} achieves an iteration complexity of $\mathcal{O}(\max\{\epsilon_g^{-7/4},\epsilon_H^{-7}\})$ for finding an $(\epsilon_g,\epsilon_H)$-SSOSP of problem \eqref{ucpb}.}
\end{remark}

\section{SCRN with recursive momentum}\label{sec:scn-rm}
In this section, we propose an SCRN method with recursive momentum, and then establish its iteration complexity. 

Specifically, our SCRN method with recursive momentum generate three sequences, $\{g^k\}$, $\{M_k\}$, and $\{x^k\}$. At the $k$th iteration, this method first compute $g^k$ as a stochastic gradient of $f$ at $x^k$ with error $\delta_k$, and compute $M_k$ as a weighted average of the stochastic Hessian evaluated at $x^0,\ldots,x^k$ using the recursive momentum scheme proposed in \cite{cutkosky2019momentum}. The next iterate $x^{k+1}$ is obtained by solving a cubic regularized Newton subproblem. Details of this method are provided in Algorithm \ref{alg:unf-ssom-rm}, with a specific choice of input parameters given in Theorem \ref{th:complexity-rm-c}.

Before analyzing Algorithm \ref{alg:unf-ssom-rm}, we make the following assumption regarding the {\it mean-cubed smoothness} of the stochastic Hessian estimator $H(\cdot;\xi)$.
\begin{assumption}\label{asp:mean-squared}
There exists $L_H>0$ such that $\mathbb{E}_\xi[\|H(y;\xi) - H(x;\xi)\|_F^3]\le L^3_H\|y-x\|^3_F$ holds for all $x,y\in\R^n$.
\end{assumption}

\begin{algorithm}[!htbp]
\caption{SCRN with recursive momentum}
\label{alg:unf-ssom-rm}
\begin{algorithmic}[0]
\State \textbf{Input:} starting point $x^0\in\R^n$, regularization parameters $\{\eta_k\}\subset(0,\infty)$, error control parameters $\{\delta_k\}\subset(0,1)$, momentum parameters $\{\theta_k\}\subset(0,1)$.
\State Initialize: $M_{-1}=0$ and $\theta_{-1}=1$.
\For{$k=0,1,2,\ldots$}
\State Construct the gradient and Hessian estimators:
\begin{align}\label{update-Mk-rm}
g^k = G_{\delta_k}(x^k;\zeta^{k}),\quad M_k = (1-\theta_{k-1})M_{k-1} + H(x^{k};\xi^{k}) - (1-\theta_{k-1})H(x^{k-1};\xi^{k}).
\end{align}
\State Update the next iterate:
\begin{align*}
x^{k+1} \in \underset{x \in \mathbb{R}^{n}}{\mathrm{Arg\,min}} \Big\{(g^k)^T (x-x^k) + \frac{1}{2}(x-x^k)^TM_k(x-x^k) + \frac{1}{6\eta_k}\|x-x^k\|^3\Big\}.
\end{align*}
\EndFor
\end{algorithmic}
\end{algorithm}

The following theorem establishes an iteration complexity bound of Algorithm \ref{alg:unf-ssom-rm} for computing an $(\epsilon_g,\epsilon_H)$-SSOSP of problem \eqref{ucpb}. Its proof is relegated to Section \ref{subsec:proof-rm}. 

\begin{theorem}\label{th:complexity-rm-c}
Suppose that Assumptions \ref{asp:basic} and \ref{asp:mean-squared} hold. Define 
\begin{align}\label{def:Mct}
M_{\mathrm{rm}} := 75(f(x^0)-f_{\mathrm{low}} + \sigma^3(L_{F}^3 + L_H^3)^{-2/3} + (L_{F}^3+L_H^3)\sigma^3 + 1),
\end{align}
where $f_{\mathrm{low}}$, $L_F$, and $\sigma$ are given in Assumption \ref{asp:basic}, and $L_H$ is given in Assumption \ref{asp:mean-squared}. Let $\{x^{k}\}$ be all iterates generated by Algorithm \ref{alg:unf-ssom-rm} with input parameters $\{(\eta_k,\theta_k,\delta_k)\}$ given by
\begin{align}\label{c-para-rm}
\eta_k= \frac{1}{17K^{1/5}},\quad \theta_k= \frac{625(L_{F}^3 + L_H^3)^{2/3}}{289K^{2/5}},\quad \delta_k= \frac{1}{17K^{3/5}}\quad \forall k\ge0.
\end{align}
Then, for any $\epsilon_g,\epsilon_H\in(0,1)$, $x^{\iota_K}$ is an $(\epsilon_g,\epsilon_H)$-SSOSP of problem \eqref{ucpb} for all $K$ satisfying
\begin{align}\label{stat-fix-rm}
K\ge \max\Big\{\Big(\frac{(3M_{\mathrm{rm}})^{2/3}}{\epsilon_g}\Big)^{5/3},\Big(\frac{(281M_{\mathrm{rm}})^{1/3}}{\epsilon_H}\Big)^5,\Big(\frac{2L}{17}\Big)^{5},7(L_{F}^3 + L_H^3)^{5/3},1\Big\},
\end{align}
where $\iota_K$ is uniformly drawn from $\{1,\ldots,K\}$. 
\end{theorem}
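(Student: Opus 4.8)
The plan is to track jointly the objective $f(x^k)$ and the Hessian estimation error $E_k := M_k - \nabla^2 f(x^k)$, and to package them into the potential \eqref{def:pot}, whose expected one-step decrease I can telescope. Writing $s^k := x^{k+1}-x^k$, I would first extract from the cubic subproblem its first- and second-order optimality conditions, $g^k + M_k s^k + \tfrac{1}{2\eta_k}\|s^k\|s^k = 0$ and $M_k + \tfrac{1}{2\eta_k}\|s^k\|I \succeq 0$. Feeding the first into \eqref{ineq:2st-desc}, replacing $\nabla^2 f(x^k)$ by $M_k - E_k$ and $\nabla f(x^k)$ by $g^k$ minus its error, and cleaning up with suitably weighted Young's inequalities (using the second-order condition to bound $-\tfrac12(s^k)^TM_ks^k$), yields the descent estimate I would isolate as Lemma \ref{lem:ppt-desc}: $f(x^{k+1}) \le f(x^k) - \tfrac{c_1}{\eta_k}\|s^k\|^3 + c_2\eta_k^{1/2}\delta_k^{3/2} + c_3\eta_k^2\|E_k\|_F^3$, where the condition $K\ge(2L/17)^5$ guarantees $\eta_k^{-1}\ge 2L$ so the cubic coefficient stays positive. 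The same two conditions, combined with \eqref{ineq:1st-desc}, control the stationarity measures at the new point: $\|\nabla f(x^{k+1})\| \lesssim \eta_k^{-1}\|s^k\|^2 + \|E_k\|_F\|s^k\| + \|g^k-\nabla f(x^k)\|$ and $-\lambda_{\min}(\nabla^2 f(x^{k+1})) \lesssim \eta_k^{-1}\|s^k\| + \|E_k\|_F + L\|s^k\|$; raising these to the powers $3/2$ and $3$ matches precisely the moment orders in Definition \ref{def:ssosp} and the moments supplied by Assumption \ref{asp:basic}.

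The core of the recursive-momentum analysis is a decay recursion for $\E[\|E_k\|_F^3]$, which I would state as Lemma \ref{lem:rec-Hes-rm}. From \eqref{update-Mk-rm} one obtains $E_k = (1-\theta_{k-1})E_{k-1} + Z_k$ with
\[
Z_k = \theta_{k-1}\big(H(x^k;\xi^k)-\nabla^2 f(x^k)\big) + (1-\theta_{k-1})\big[\big(H(x^k;\xi^k)-H(x^{k-1};\xi^k)\big) - \big(\nabla^2 f(x^k)-\nabla^2 f(x^{k-1})\big)\big].
\]
The decisive facts are $\E[Z_k\mid\mathcal{F}_{k-1}] = 0$ and, by Assumption \ref{asp:basic}(d) together with Assumption \ref{asp:mean-squared} and the Frobenius-Lipschitz part of Assumption \ref{asp:basic}(b) applied to the centered increment, $\E[\|Z_k\|_F^3\mid\mathcal{F}_{k-1}] \lesssim \theta_{k-1}^3\sigma^3 + (1-\theta_{k-1})^3(L_F^3+L_H^3)\|s^{k-1}\|^3$ — this is exactly where the combined constant $L_F^3+L_H^3$ in \eqref{def:Mct} and \eqref{c-para-rm} originates. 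To convert the vanishing linear term into genuine decay I would Taylor-expand $E\mapsto\|E\|_F^3$ about $(1-\theta_{k-1})E_{k-1}$: its gradient is $3\|E\|_F E$ and its second-order behaviour is $\mathcal{O}(\|E\|_F)$, so the conditional expectation kills the martingale term and leaves $\E[\|E_k\|_F^3\mid\mathcal{F}_{k-1}] \le (1-\theta_{k-1})^3\|E_{k-1}\|_F^3 + C\big((1-\theta_{k-1})\|E_{k-1}\|_F\,\E[\|Z_k\|_F^2] + \E[\|Z_k\|_F^3]\big)$. Splitting the cross term by a Young's inequality scaled so a small multiple of $\theta_{k-1}\|E_{k-1}\|_F^3$ is absorbed into the decay produces the clean recursion $\E[\|E_k\|_F^3] \le (1-c\theta_{k-1})\E[\|E_{k-1}\|_F^3] + C'\theta_{k-1}^{-1/2}\big(\theta_{k-1}^3\sigma^3 + (L_F^3+L_H^3)\E[\|s^{k-1}\|^3]\big)$.

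With these two inequalities in hand I would set $\Phi_k = f(x^k) + \beta_k\,\E[\|E_k\|_F^3]$ and choose the weight $\beta_k \sim \eta_k^2/\theta_k$ so that the destabilizing $\eta_k^2\|E_k\|_F^3$ term from the descent lemma is dominated by the decay $c\theta_k\beta_k\|E_k\|_F^3$ in the error recursion, while the coupling term $\beta_k\theta_k^{-1/2}(L_F^3+L_H^3)\|s^{k-1}\|^3$ is dominated by the genuine cubic gain $c_1\eta_k^{-1}\|s^{k-1}\|^3$; the two compatibility requirements collapse to $\theta_k \gtrsim (L_F^3+L_H^3)^{2/3}\eta_k^2$, which is precisely the relative scaling built into \eqref{c-para-rm}. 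Telescoping $\E[\Phi_{k+1}] \le \E[\Phi_k] - \tfrac{c_1}{2\eta_k}\E[\|s^k\|^3] + (\text{noise})$ from $k=0$ to $K-1$, using $f\ge f_{\mathrm{low}}$, the initial bound $\E[\|E_0\|_F^3]\le\sigma^3$ (valid because $\theta_{-1}=1$ forces $M_0 = H(x^0;\xi^0)$), and the constant-order accumulated noise and gradient-error contributions (each summing to $\mathcal{O}(M_{\mathrm{rm}})$ under \eqref{c-para-rm}), bounds $\sum_k\E[\|s^k\|^3]$ by $\mathcal{O}(\eta_k M_{\mathrm{rm}})$ and $\sum_k\E[\|E_k\|_F^3]$ by $\mathcal{O}(K^{2/5}M_{\mathrm{rm}})$. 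Averaging the stationarity bounds over the uniform index $\iota_K$, again with balanced Young's inequalities for the cross terms, gives $\E[\|\nabla f(x^{\iota_K})\|^{3/2}] = \mathcal{O}(\eta_k^{-1/2}M_{\mathrm{rm}}/K)$ and $\E\big[\,[-\lambda_{\min}(\nabla^2 f(x^{\iota_K}))]_+^3\,\big] = \mathcal{O}(\eta_k^{-2}M_{\mathrm{rm}}/K)$. Substituting $\eta_k\sim K^{-1/5}$, $\theta_k\sim K^{-2/5}$, $\delta_k\sim K^{-3/5}$ turns these into $\mathcal{O}(K^{-9/10}M_{\mathrm{rm}})$ and $\mathcal{O}(K^{-3/5}M_{\mathrm{rm}})$; imposing $\le\epsilon_g^{3/2}$ and $\le\epsilon_H^3$ reproduces the two leading terms of \eqref{stat-fix-rm}, while $(2L/17)^5$ and $7(L_F^3+L_H^3)^{5/3}$ are simply the thresholds on $K$ keeping $\eta_k L\le 1$ and $c\theta_k\le 1$.

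I expect the main obstacle to be Lemma \ref{lem:rec-Hes-rm}, the third-moment error recursion. Unlike the second-moment STORM estimate, $\|\cdot\|_F^3$ admits no exact bias–variance decomposition, so everything rests on the $C^{1,1}$-type Taylor expansion of the cube of the norm — which is exactly why only the Frobenius (Schatten-$2$) norm is usable, as flagged in the Remark, since $\|\cdot\|_{S_p}^3$ fails to be continuously differentiable for $p\neq 2$. The delicate points are controlling the second-order remainder of that expansion (which forces the $\theta_{k-1}^{-1/2}$ loss through Young's inequality) and verifying that this loss remains compatible with the target rate once $\theta_k\sim\eta_k^2$. A secondary difficulty is calibrating the potential weights $\beta_k$ so that the $\|E_k\|_F^3$ and the $\|s^{k-1}\|^3$ couplings cancel simultaneously; it is this double cancellation that pins down the exponents $1/5,\,2/5,\,3/5$ in \eqref{c-para-rm}.
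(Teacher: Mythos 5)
Your proposal is correct and follows essentially the same route as the paper: the same potential $f(x^k)+p_k\|M_k-\nabla^2 f(x^k)\|_F^3$, the same descent and stationarity lemmas from the cubic subproblem's optimality conditions, and the same third-moment error recursion obtained by a second-order Taylor expansion of $\|\cdot\|_F^3$ (whose conditionally zero-mean linear term vanishes and whose quadratic cross term is split by a Young inequality costing the $\theta_k^{-1/2}$ factor), with the weight $p_k$ calibrated exactly as you describe to force $\theta_k\sim(L_F^3+L_H^3)^{2/3}\eta_k^2$. The only cosmetic difference is that the paper packages the Taylor expansion as a standalone inequality (Lemma \ref{lem:expand-nonsquare1}, via the $9$-Lipschitz Hessian of $w\mapsto\|w\|^3$) rather than expanding in place.
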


\begin{remark}
{\it From Theorem \ref{th:complexity-rm-c}, we observe that Algorithm \ref{alg:unf-ssom-rm} with input parameters given by \eqref{c-para-rm} achieves an iteration complexity of $\mathcal{O}(\max\{\epsilon_g^{-5/3},\epsilon_H^{-5}\})$ for finding an $(\epsilon_g,\epsilon_H)$-SSOSP of problem \eqref{ucpb}. This bound improves upon the one for Algorithm \ref{alg:unf-ssom-pm} established in Theorem \ref{th:complexity-pm-c}.}
\end{remark}

\section{Numerical experiments}\label{sec:experiments}

In this section, we conduct numerical experiments to evaluate the performance of Algorithms \ref{alg:unf-ssom-pm} and \ref{alg:unf-ssom-rm}, abbreviated as SCRN-PM and SCRN-RM, respectively. We compare these methods with the adaptive cubic regularized Newton method \cite{cartis2011adaptive} (A-CRN), stochastic cubic regularized Newton method with momentum \cite{chayti2024improving} (SCRN-M), and SpaRSA \cite{wright2009sparse}. The experiments are conducted on three nonconvex statistical learning problems using datasets from LIBSVM\footnote{\url{https://www.csie.ntu.edu.tw/~cjlin/libsvmtools/datasets/}}. All the algorithms are coded in Python, and all computations are performed on a laptop with an Intel Core i7 processor and 10 GB of RAM.



\subsection{Regularized logistic regression problems}\label{subsec:5.1}
In this subsection, we consider the regularized logistic regression problem:
\begin{align}\label{logistic-pro}
\min_{x\in\R^n}\ \sum_{i=1}^m \left(b_i \ln(\phi(x^Ta_i)) + (1 - b_i) \ln(1 - \phi(x^Ta_i) \right) + \lambda \sum_{j=1}^n\frac{(\gamma x_j)^2}{1 + (\gamma x_j)^2},
\end{align}
where $\phi(t) = e^t/(1 + e^t)$ denotes the sigmoid function, $\{(a_{i},b_{i})\}_{1\le i\le m} \subset \R^n\times \R$ is the given data, and $(\lambda,\gamma)=(0.001,10)$. We consider three datasets `a9a', `phishing', and `w8a' from LIBSVM. 

We apply SCRN-PM, SCRN-RM, A-CRN, SCRN-M, and SpaRSA to solve problem \eqref{logistic-pro}. All methods are initialized at $[0.5,\ldots,0.5]^T$. For SCRN-M, we choose $50\%$ of the elements from the gradient and Hessian, respectively, to construct unbiased estimators of $\nabla f$ and $\nabla^2 f$. For SCRN-PM and SCRN-RM, we choose set $g^k$ as full gradients $\nabla f(x^k)$ for all $k\ge0$, and choose $50\%$ of the elements from the Hessian to construct unbiased Hessian estimators. For CRN and all SCRN methods, we adopt the Lanczos method used in \cite{cartis2011adaptive} to solve the cubic regularized subproblems. We compare these methods in terms of the function value gap defined by $f(x^k)-f^*$, where $f^*$ is the minimum objective value found during the first $2000$ iterations across all methods. The algorithmic parameters are selected to suit each method well in terms of computational performance.

For each dataset, we plot the function value gap in Figure \ref{logistic} to illustrate the convergence behavior of all competing methods. From Figure \ref{logistic}, we observe that SCRN-PM and SCRN-RM vastly outperform SCRN-M and SpaRSA. In addition, SCRN-PM and SCRN-RM achieve a comparable performance to CRN in terms the number of iterations, while outperforming CRN in terms of CPU time. These observations indicate that full gradients significantly improve the performance of the SCRN algorithm, bringing it close to that of the deterministic CRN while reducing computation time. However, when SCRN uses stochastic gradients, its convergence becomes much slower and may offer little to no advantage over first-order methods. Furthermore, we observe that SCRN-RM slightly outperforms SCRN-PM, which corroborates our theoretical results.



\begin{figure*}[htbp]
  	\centering
  	\subfigure
  	{\includegraphics[width=0.327\textwidth]{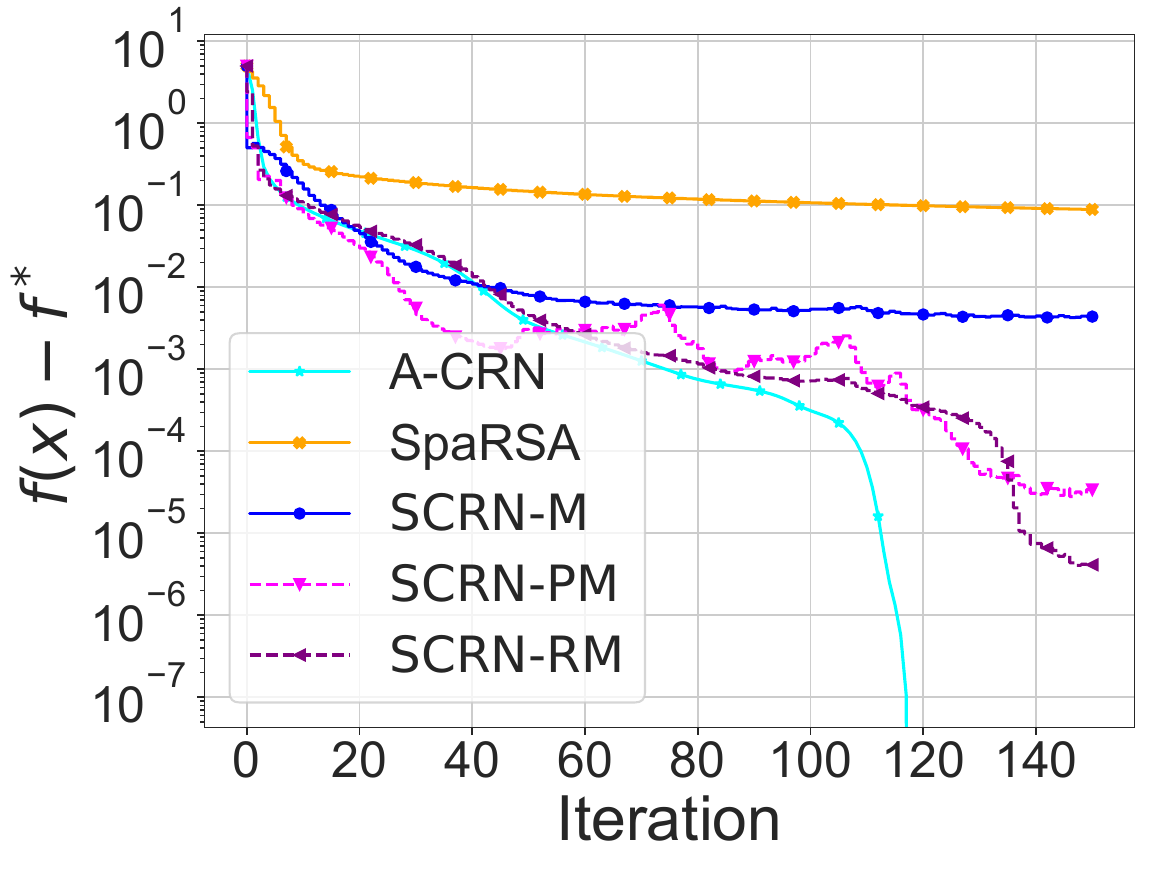}}
  	\subfigure
  	{\includegraphics[width=0.327\textwidth]{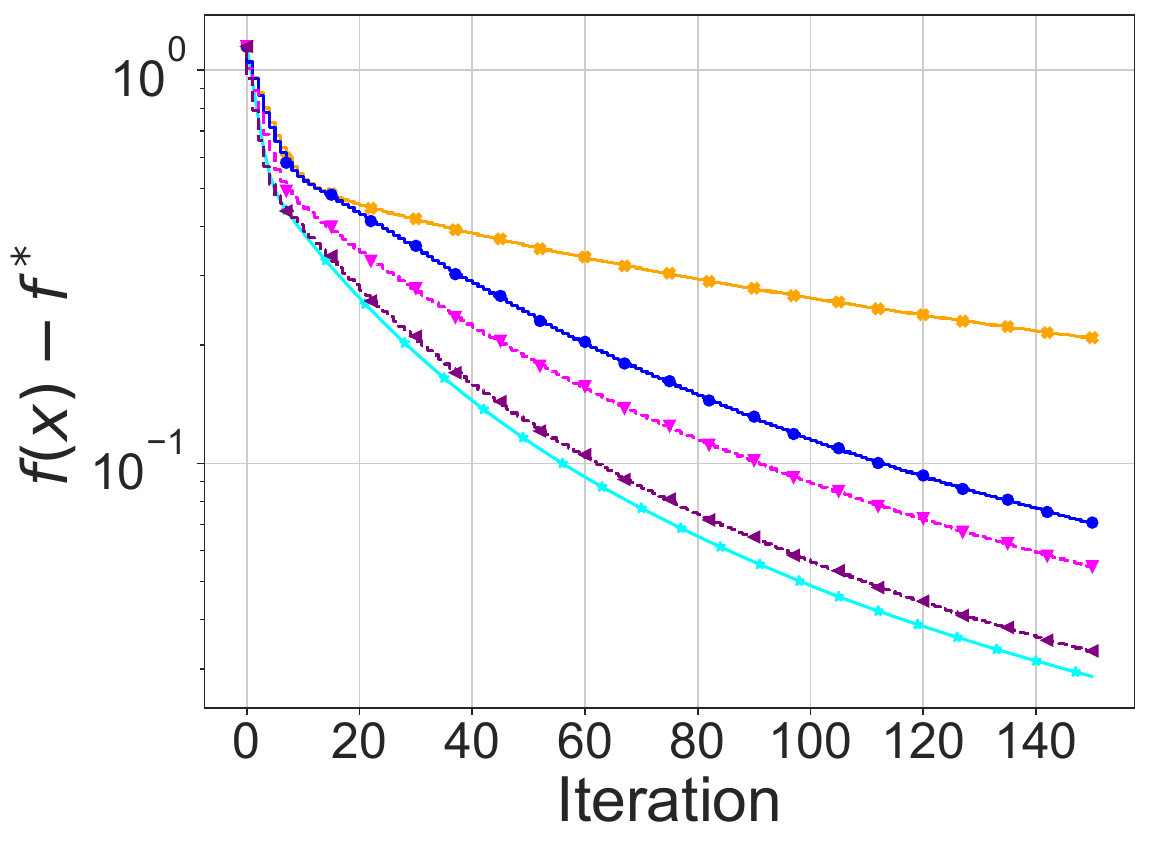}}
  	\subfigure
  	{\includegraphics[width=0.327\textwidth]{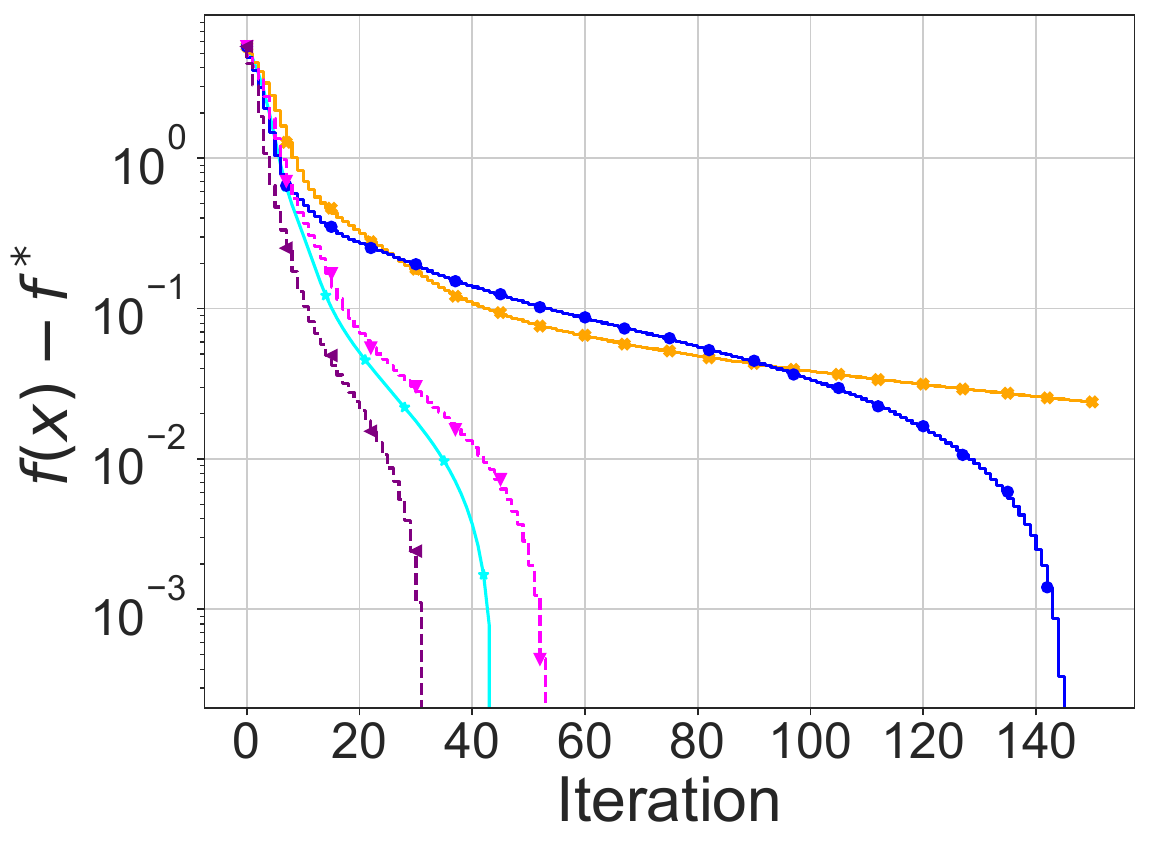}}
  	\subfigure
  	{\includegraphics[width=0.327\textwidth]{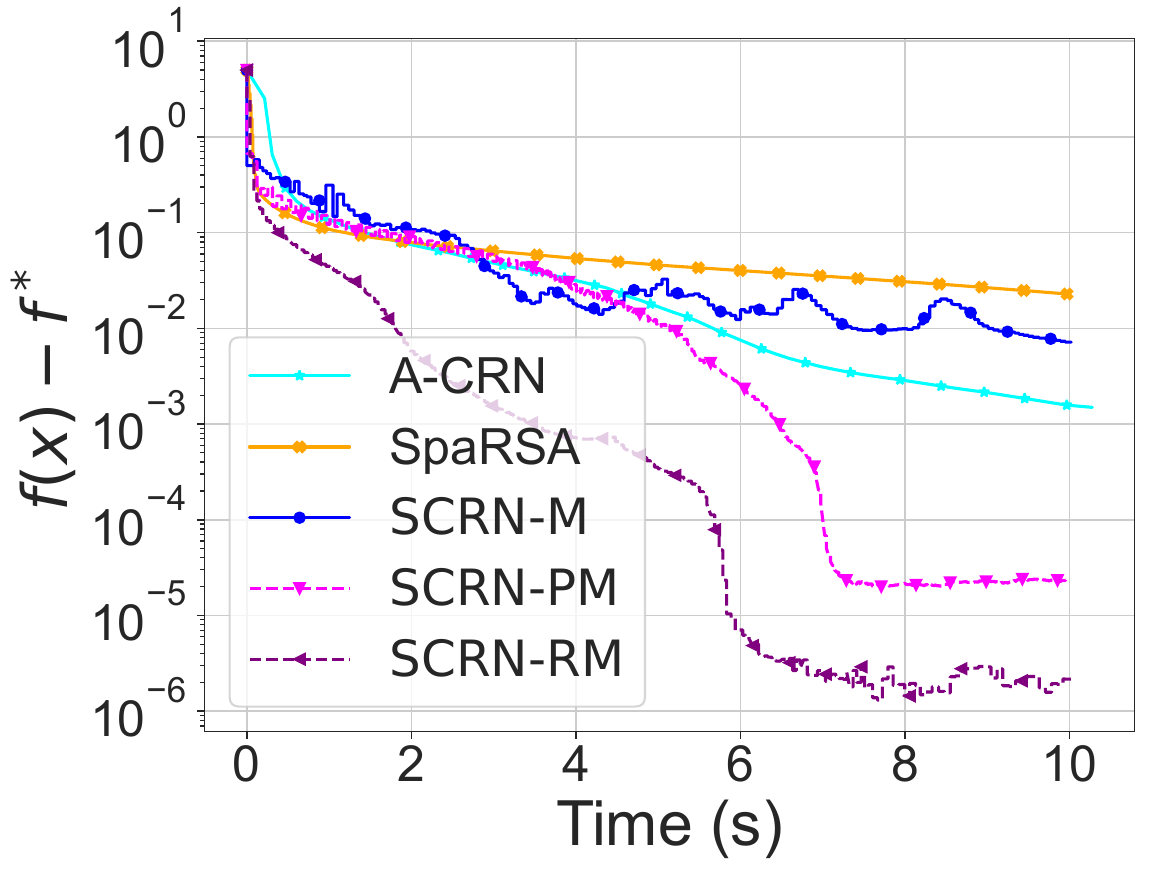}}
  	\subfigure
  	{\includegraphics[width=0.327\textwidth]{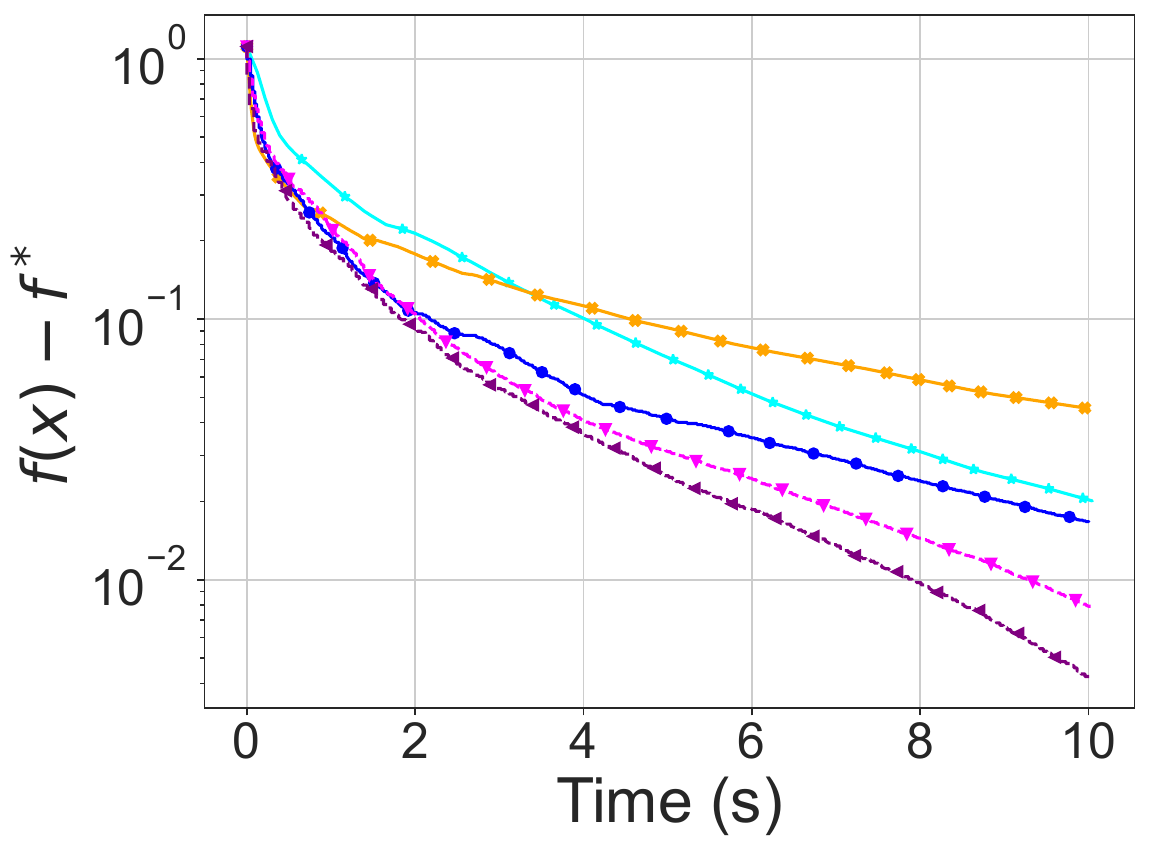}}
  	\subfigure
  	{\includegraphics[width=0.327\textwidth]{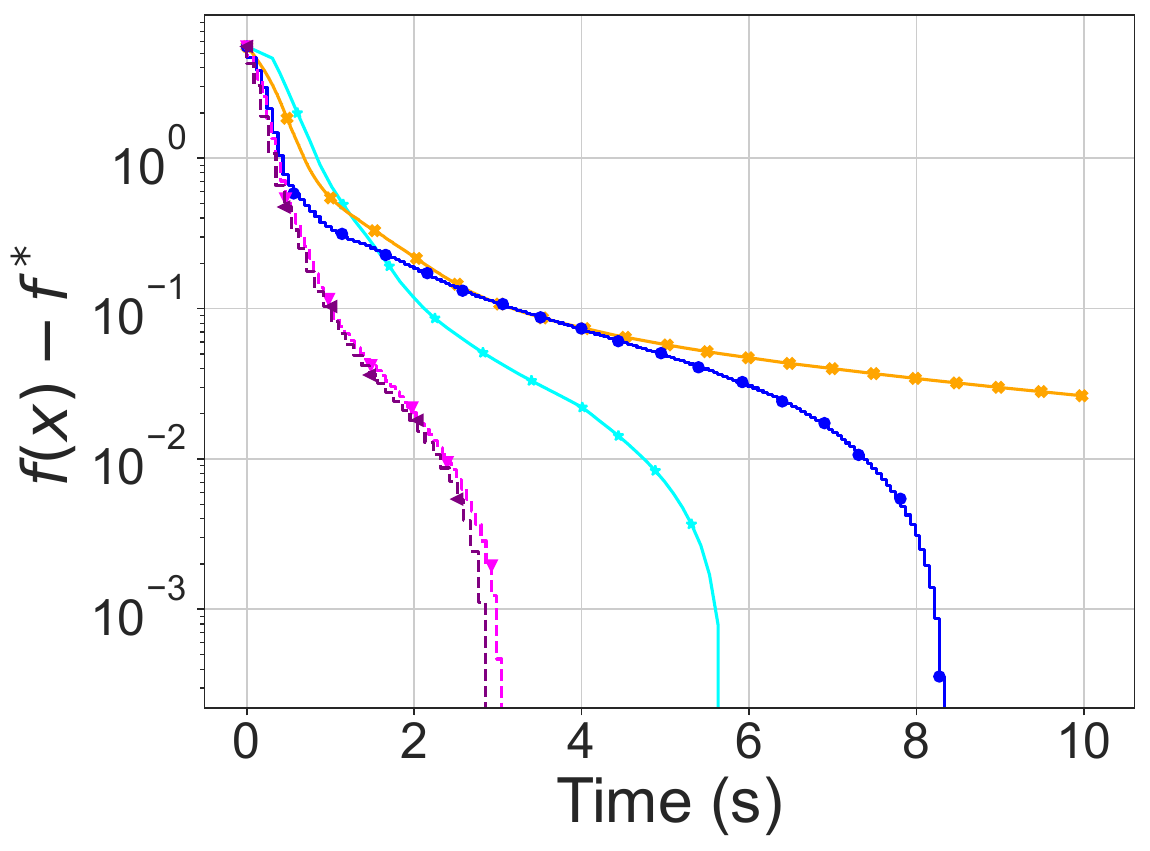}}

  	\caption{Convergence behavior of objective value gap for problem \eqref{logistic-pro}. Correspond to the results on the 'a9a'(left), 'phishing'(middle), and 'w8a'(right) datasets, respectively.}
  	\label{logistic}
  	\end{figure*}

\subsection{Regularized nonlinear least-squares problems}
In this subsection, we consider the regularized nonlinear least-squares problem:
\begin{align}\label{least-square}
\min_{x \in \mathbb{R}^n} \frac{1}{m}\sum_{i=1}^m (b_i - \phi(a_i^{\top} x))^2 + \lambda \sum_{j=1}^n\frac{(\gamma x_j)^2}{1 + (\gamma x_j)^2},  
\end{align}
where $\phi(t) = e^t/(1 + e^t)$ denotes the sigmoid function, $\{(a_{i},b_{i})\}_{1\le i\le m} \subset \R^n\times \R$ is the given data, and $(\lambda,\gamma)=(0.001,1)$. We consider three datasets `a9a', `phishing', and `w8a' from LIBSVM.  

\begin{figure*}[htbp]
  	\centering
  	\subfigure
  	{\includegraphics[width=0.327\textwidth]{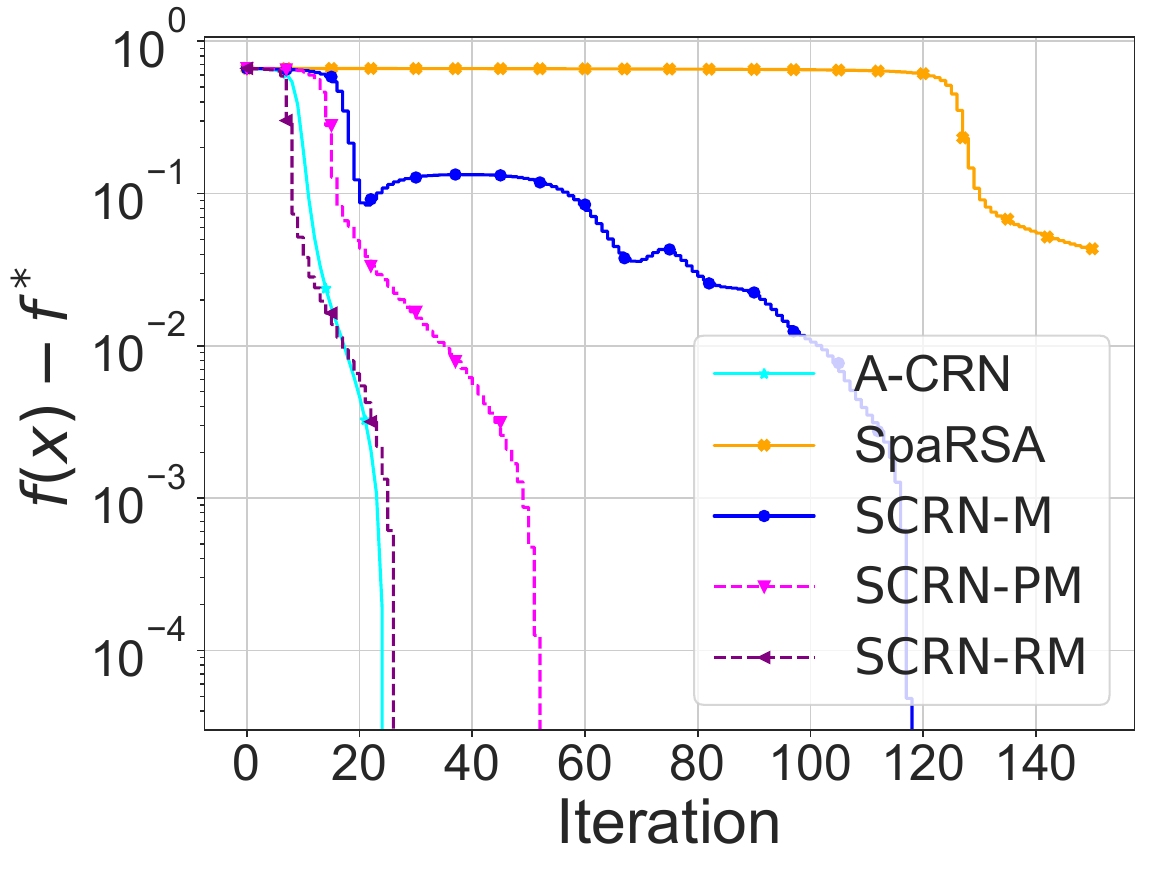}}
  	\subfigure
  	{\includegraphics[width=0.327\textwidth]{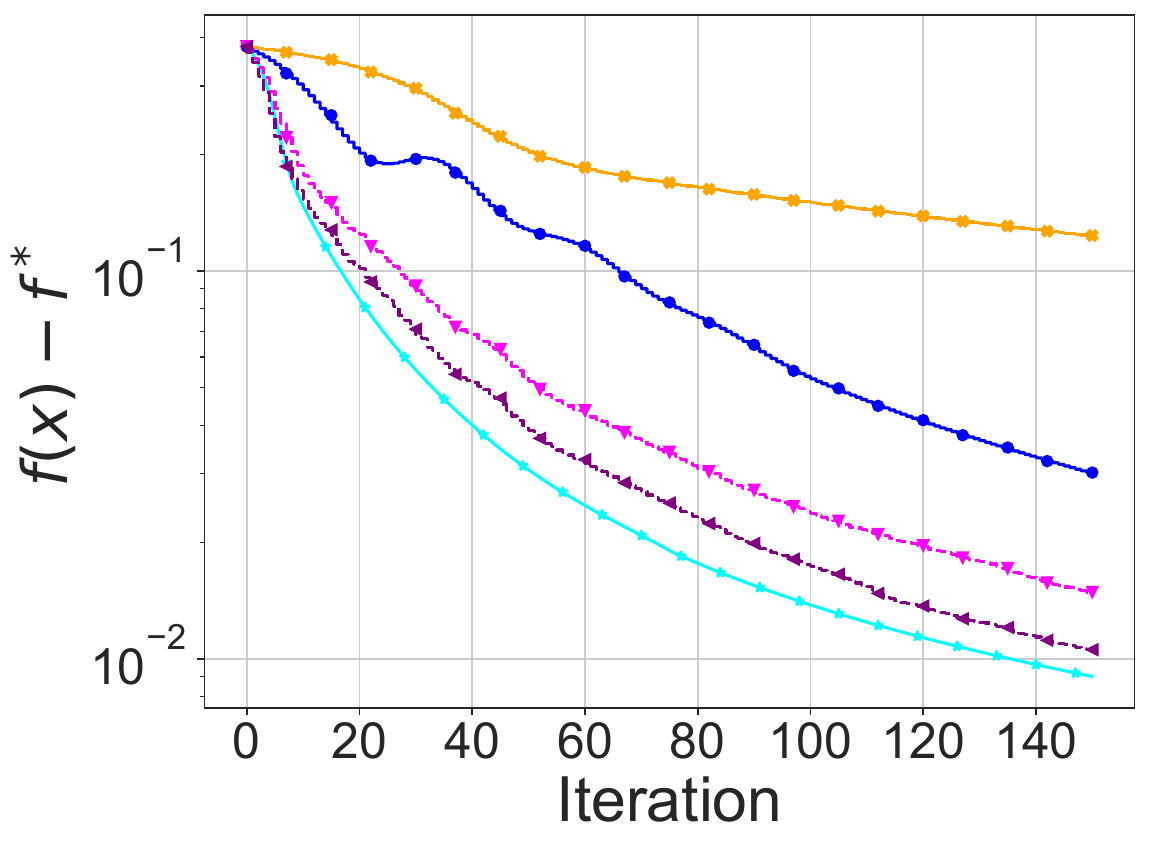}}
  	\subfigure
  	{\includegraphics[width=0.327\textwidth]{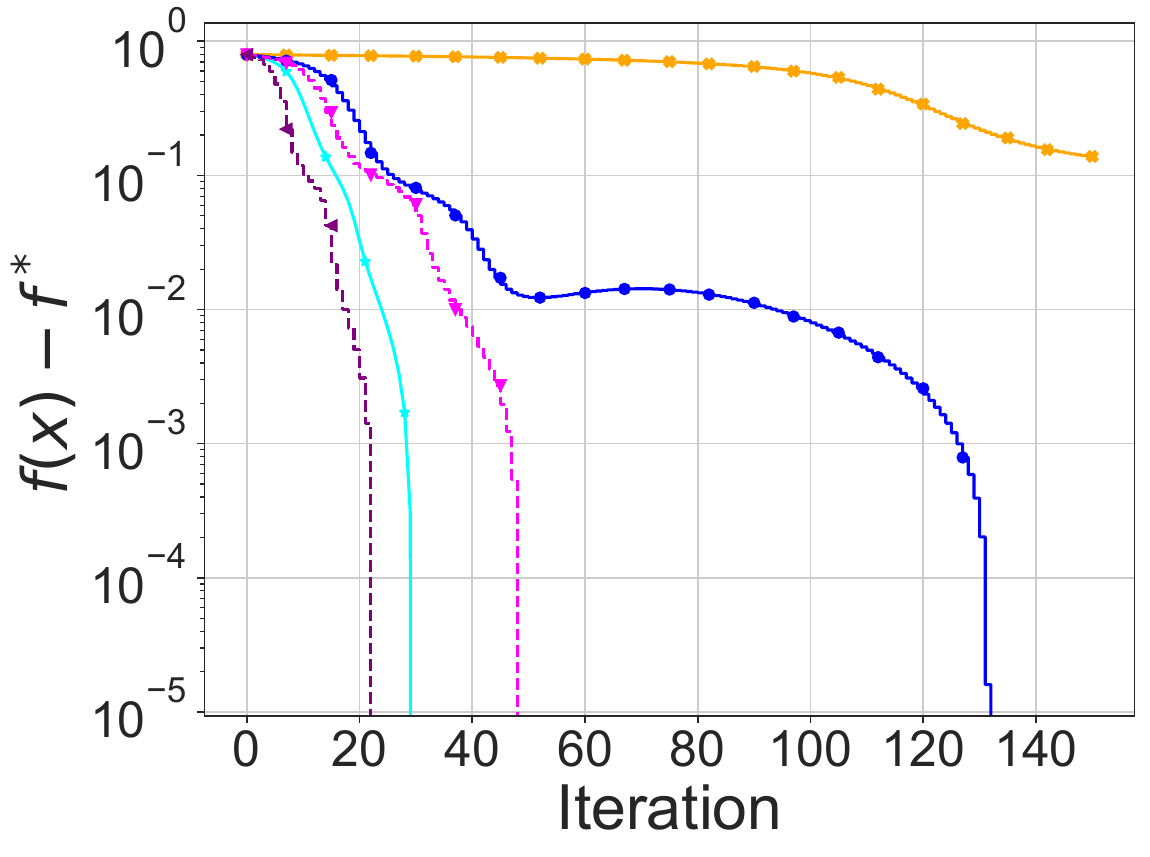}}
  	\subfigure
  	{\includegraphics[width=0.327\textwidth]{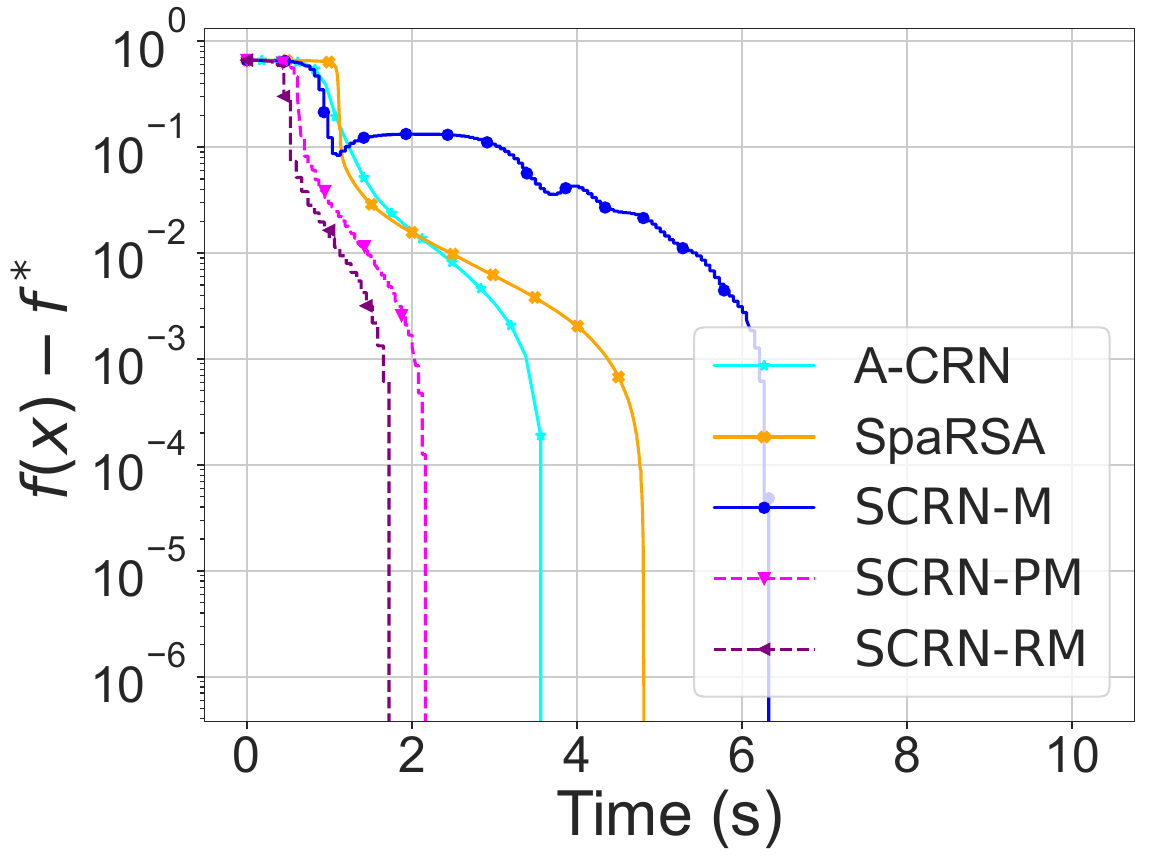}}
  	\subfigure
  	{\includegraphics[width=0.327\textwidth]{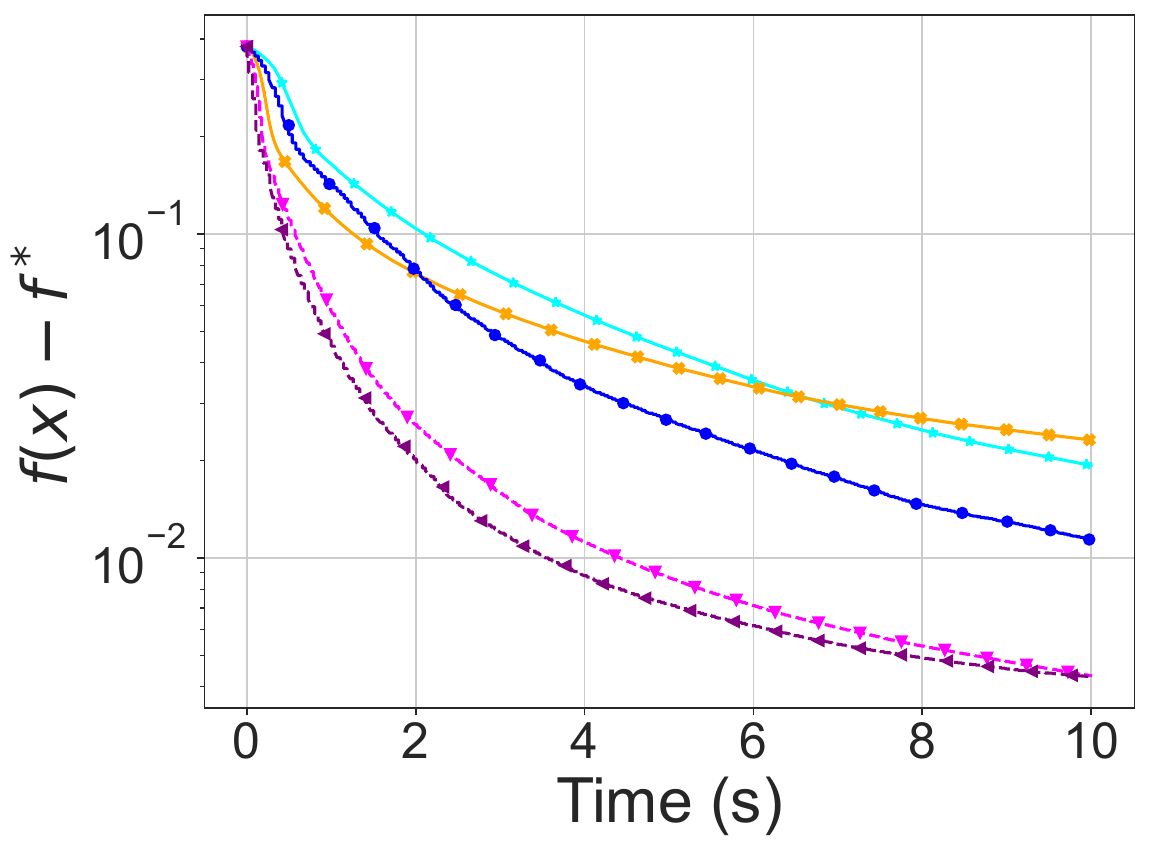}}
  	\subfigure
  	{\includegraphics[width=0.327\textwidth]{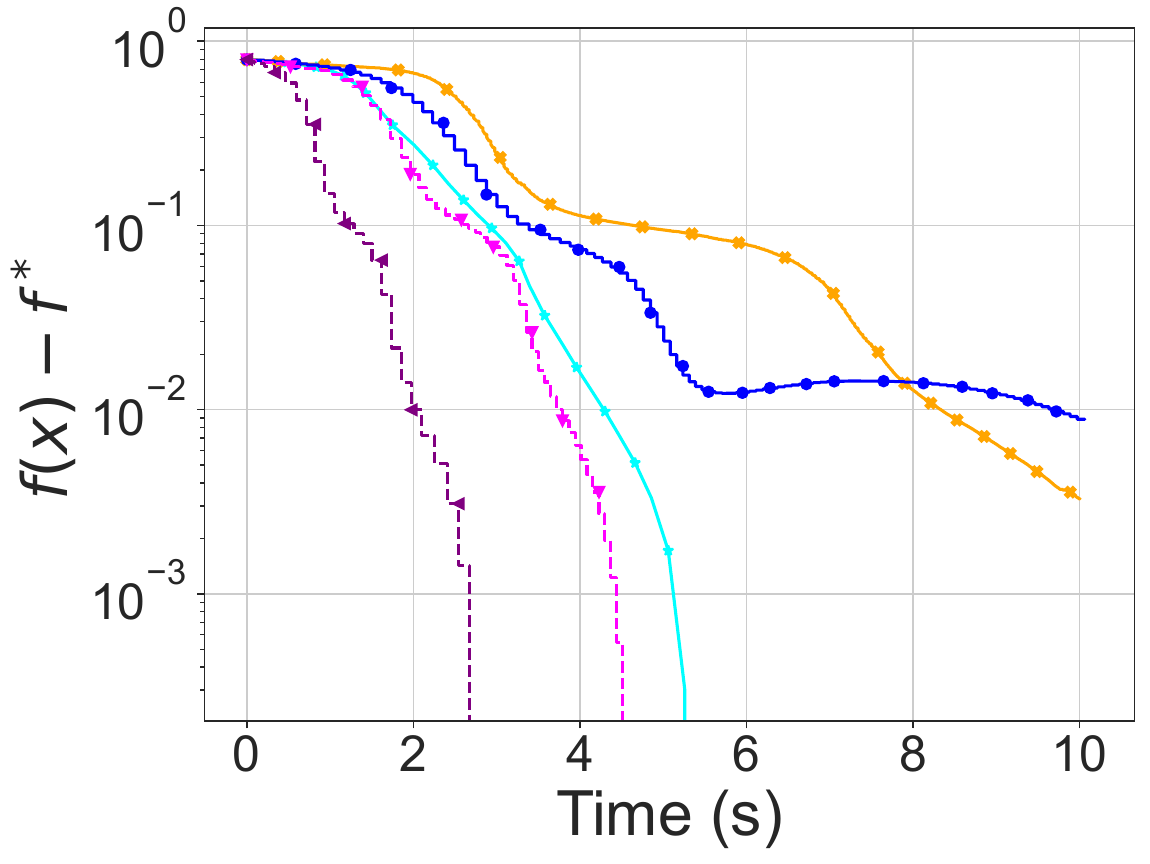}}

  	\caption{Convergence behavior of objective value gap for problem \eqref{least-square}. Correspond to the results on the 'a9a'(left), 'phishing'(middle), and 'w8a'(right) datasets, respectively.}
  	\label{leastsquart}
  	\end{figure*}
We apply SCRN-PM, SCRN-RM, A-CRN, SCRN-M, and SpaRSA to solve problem \eqref{logistic-pro}. All methods are initialized at $[0.5,\ldots,0.5]^T$. For SCRN-M, SCRN-PM, and SCRN-RM, we construct gradient and Hessian estimators using the same strategy as described in Section \ref{subsec:5.1}. For CRN and all SCRN methods, we adopt the Lanczos method used in \cite{cartis2011adaptive} to solve the cubic regularized subproblems. We compare these methods in terms of the function value gap defined by $f(x^k)-f^*$, where $f^*$ is the minimum objective value found during the first $2000$ iterations across all methods. The algorithmic parameters are selected to suit each method well in terms of computational performance.

For each dataset, we plot the function value gap in Figure \ref{leastsquart} to illustrate the convergence behavior of all competing methods. As shown in Figure \ref{leastsquart}, SCRN-PM and SCRN-RM significantly outperform both SCRN-M and SpaRSA. In addition, SCRN-PM and SCRN-RM achieve performance comparable to that of CRN in terms of iteration counts, while outperforming CRN in CPU time. This suggests that using full gradients greatly improves the convergence speed of SCRN, bringing it close to the deterministic CRN while reducing the computational time per iteration. In contrast, when stochastic gradients are used, SCRN converges much more slowly and may offer little to no advantage over first-order methods. In addition, SCRN-RM slightly outperforms SCRN-PM, which is consistent with our theoretical results.

\subsection{Robust linear regression}
In this subsection, we consider the robust linear regression problem:
\begin{align}\label{rob}
 \min_{x \in \mathbb{R}^n} \frac{1}{m} \sum_{i=1}^m \phi(b_i - a_i^{\top} x),   
\end{align}
where $\phi(t) = \ln\left(t^2/2 + 1\right)$ is a nonconvex loss function, $\{(a_{i},b_{i})\}_{1\le i\le n} \subset \R^{m}\times \R$ is the given data, and $(\lambda,\gamma)=(0.001,1)$. We consider three datasets `ijcnn1', `phishing', and `w8a' from LIBSVM.    

We apply SCRN-PM, SCRN-RM, A-CRN, SCRN-M, and SpaRSA to solve problem \eqref{rob}. All methods are initialized at $[0.5,\ldots,0.5]^T$. For SCRN-M, SCRN-PM, and SCRN-RM, we construct gradient and Hessian estimators using the same strategy as described in Section \ref{subsec:5.1}. For CRN and all SCRN methods, we adopt the Lanczos method used in \cite{cartis2011adaptive} to solve the cubic regularized subproblems. We compare these methods in terms of the function value gap defined by $f(x^k)-f^*$, where $f^*$ is the minimum objective value found during the first $2000$ iterations across all methods. The algorithmic parameters are selected to suit each method well in terms of computational performance.

For each dataset, we plot the function value gap in Figure \ref{robust} to demonstrate the convergence behavior of all competing methods. As illustrated, SCRN-PM and SCRN-RM substantially outperform SCRN-M and SpaRSA. In addition, our SCRN-PM and SCRN-RM achieve a comparable performance to CRN in terms of the number of iterations, while outperforming CRN in CPU time. This indicates that incorporating full gradients significantly accelerates the convergence of SCRN, bringing its performance close to that of the deterministic CRN while reducing the computational time per iteration. Conversely, when stochastic gradients are employed, SCRN converges much more slowly and may provide little to no advantage over first-order methods. In addition, SCRN-RM slightly outperforms SCRN-PM, aligning with our theoretical results.



\begin{figure*}[htbp]
  	\centering
  	\subfigure
  	{\includegraphics[width=0.327\textwidth]{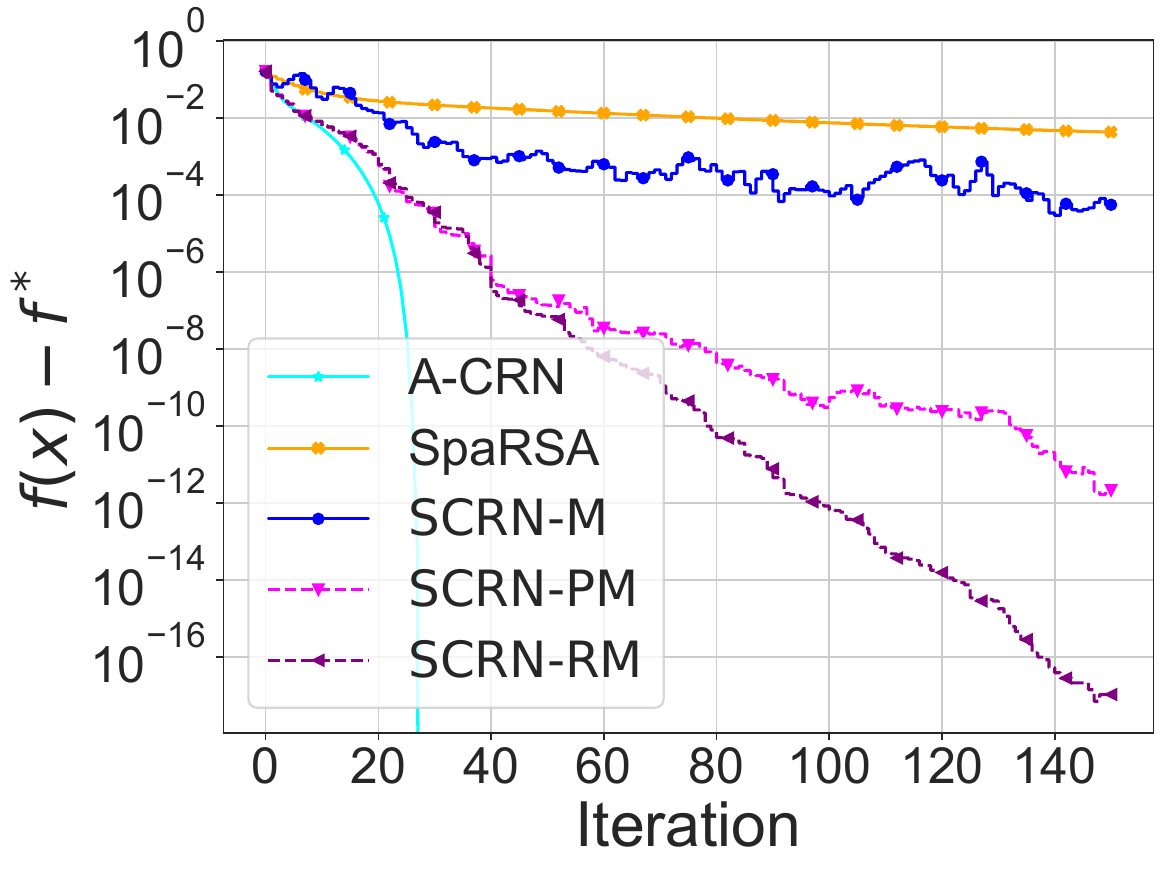}}
  	\subfigure
  	{\includegraphics[width=0.327\textwidth]{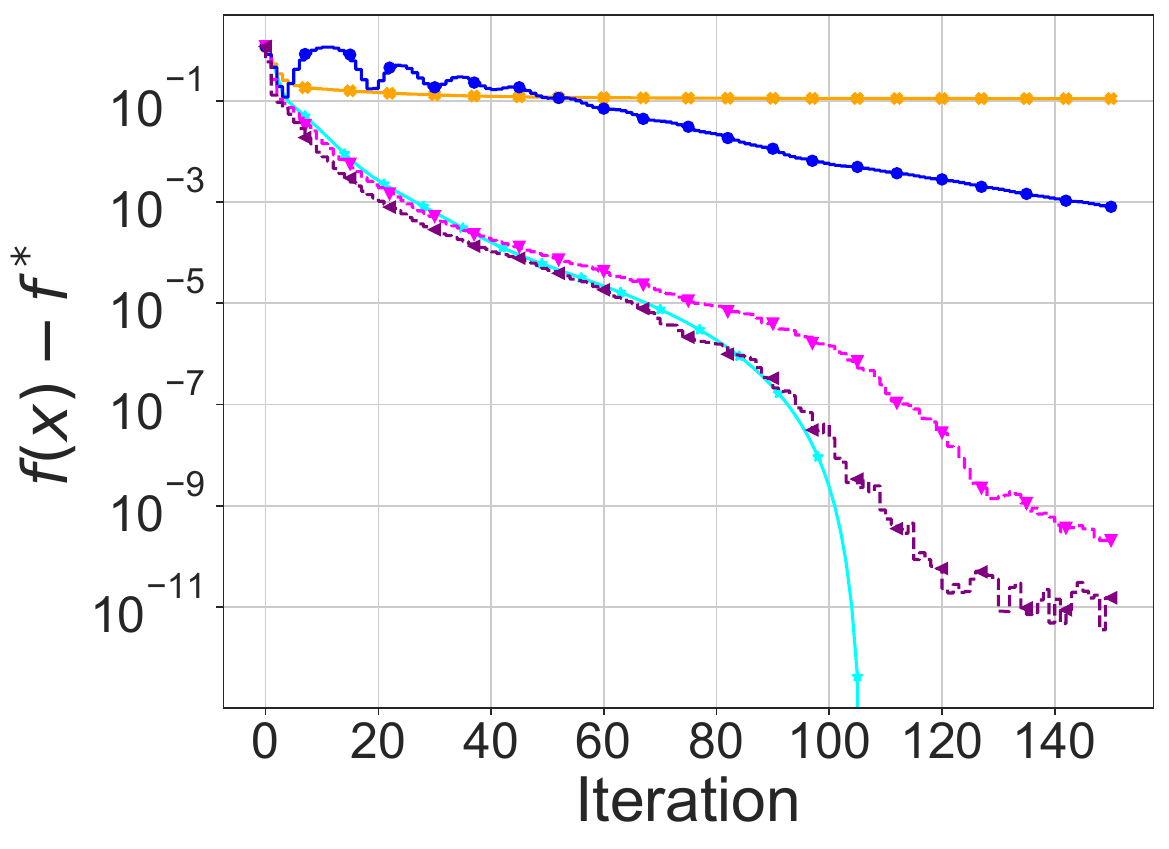}}
  	\subfigure
  	{\includegraphics[width=0.327\textwidth]{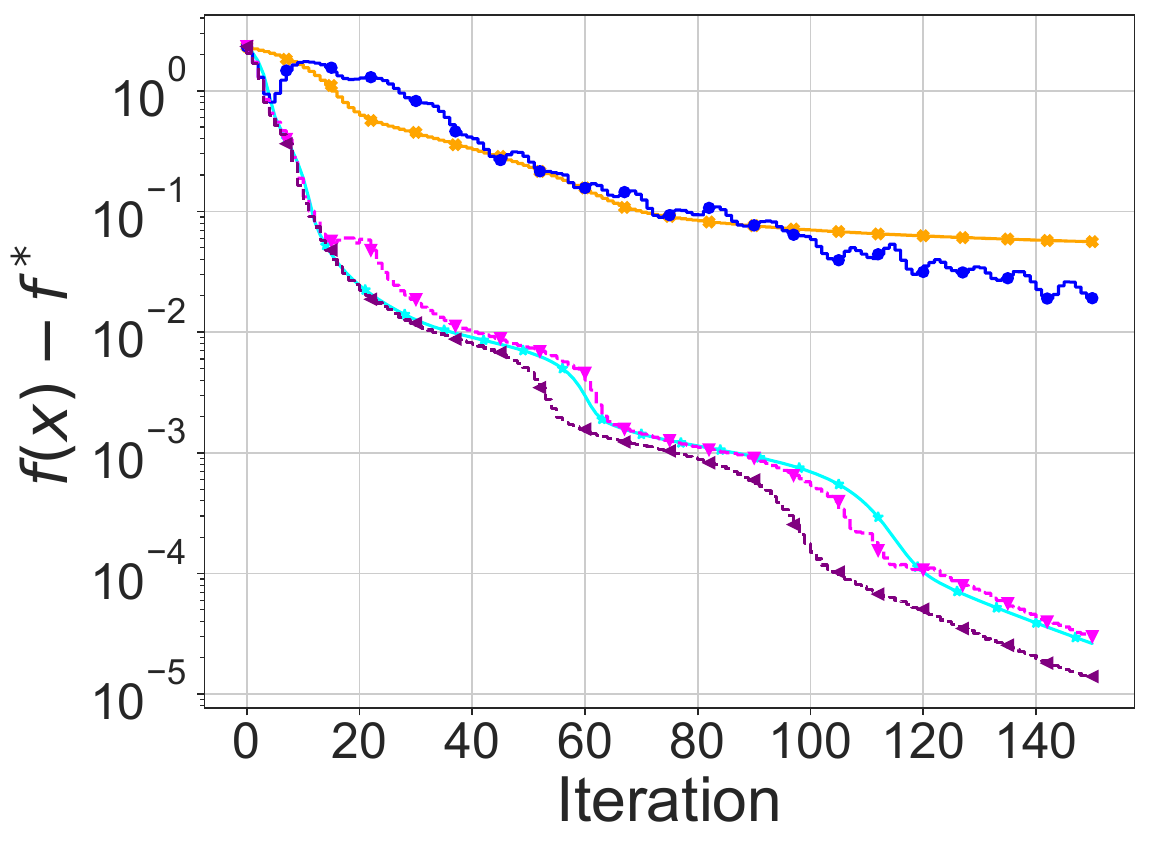}}
  	\subfigure
  	{\includegraphics[width=0.327\textwidth]{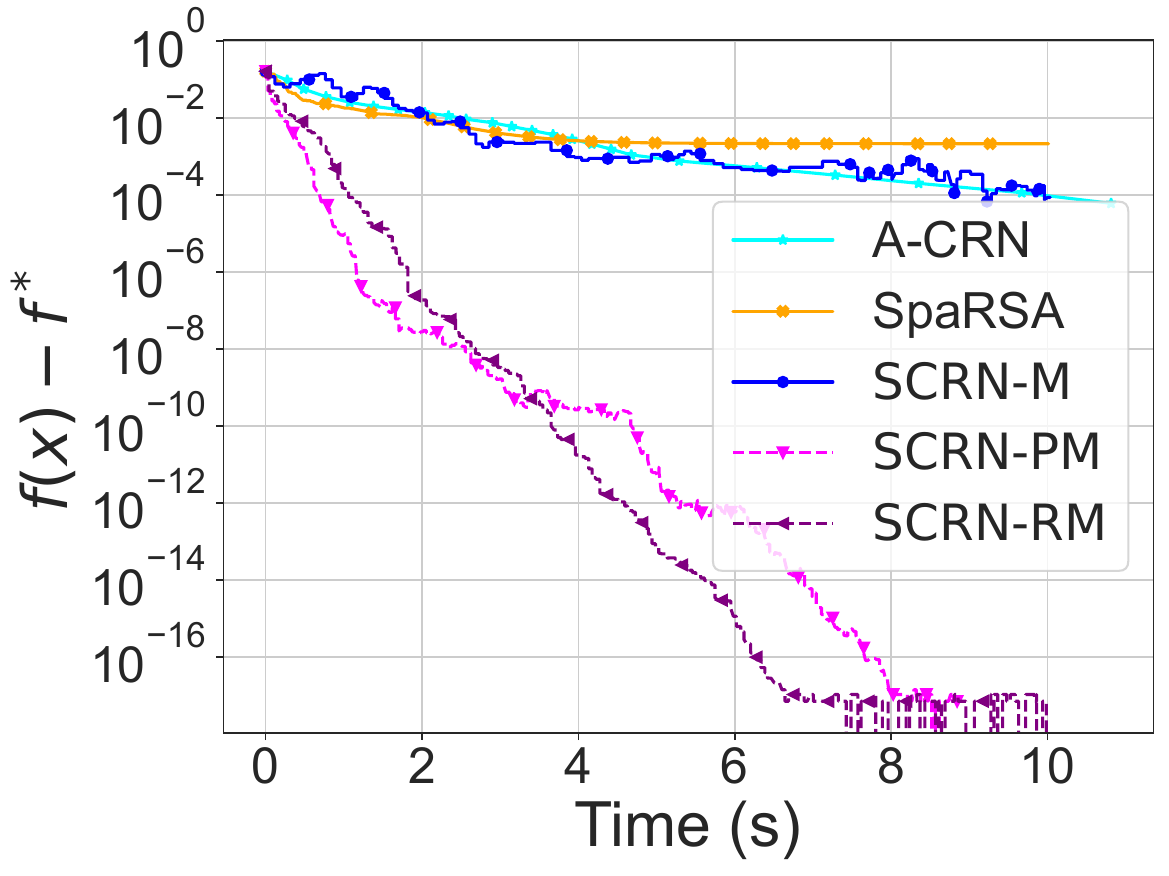}}
  	\subfigure
  	{\includegraphics[width=0.327\textwidth]{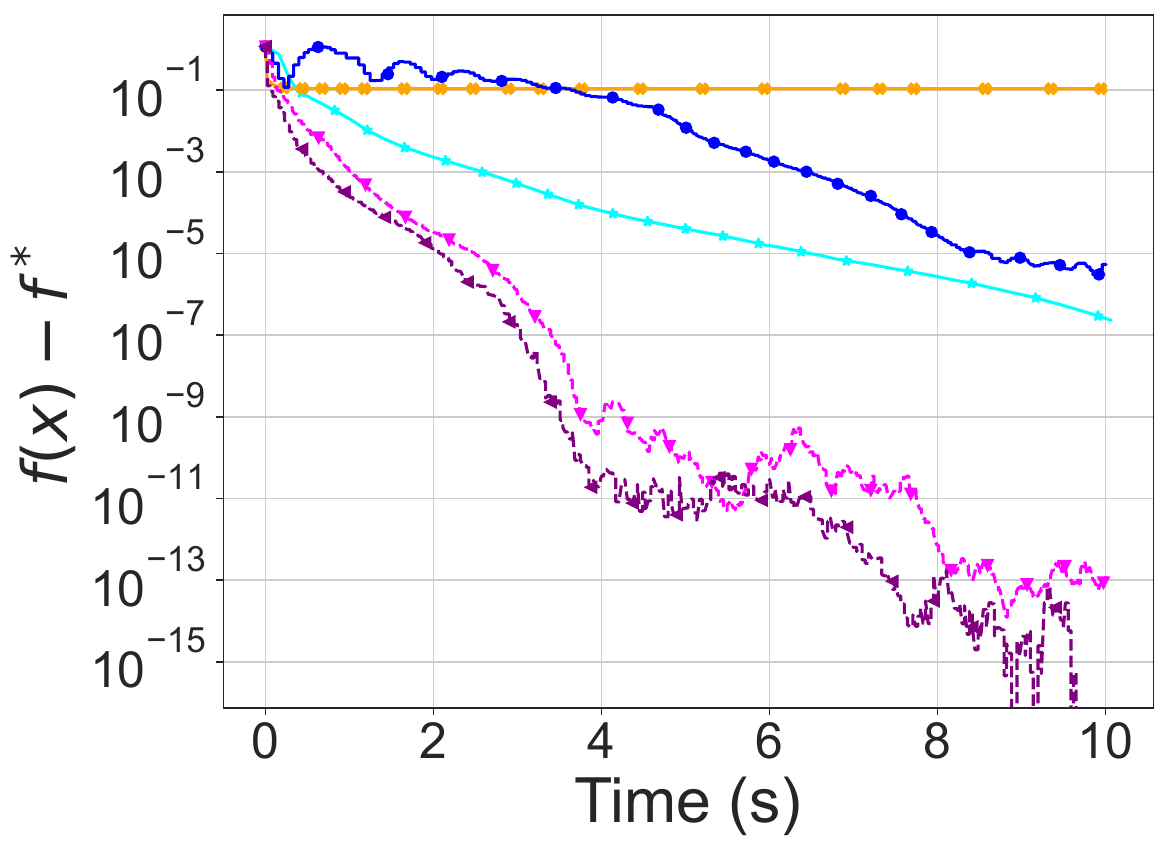}}
  	\subfigure
  	{\includegraphics[width=0.327\textwidth]{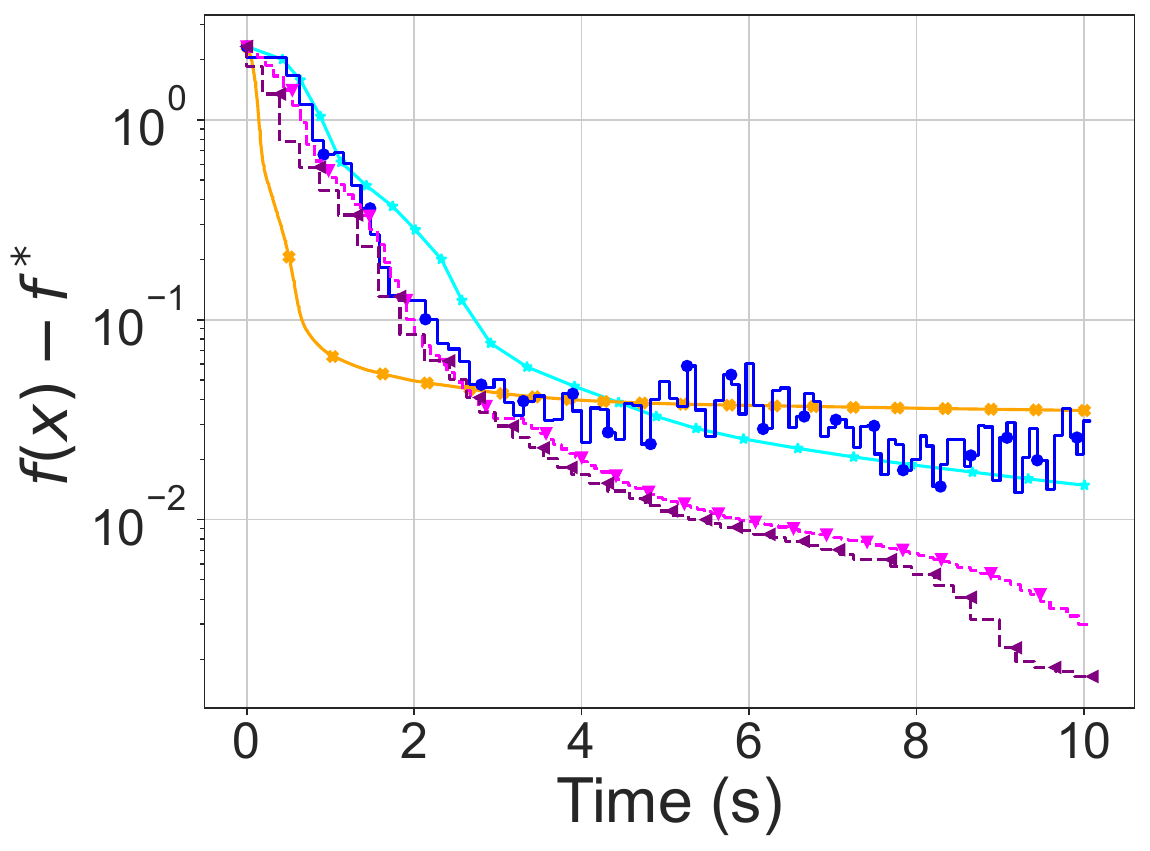}}

  	\caption{Convergence behavior of objective value gap for problem \eqref{rob}. Correspond to the results on the `ijcnn1'(left), `phishing'(middle), and `w8a'(right) datasets, respectively.}
  	\label{robust}
  	\end{figure*}

\section{Proof of the main results}\label{sec:proof}
In this section, we provide proofs of Theorems \ref{th:complexity-pm-c} and \ref{th:complexity-rm-c}. 

For notational convenience, we define a sequence of potentials for Algorithms \ref{alg:unf-ssom-pm} and \ref{alg:unf-ssom-rm} as 
\begin{align}\label{def:pot}
\mathcal{P}_k:= f(x^k) + p_k\|M_k - \nabla^2 f(x^k)\|_F^3 \qquad\forall k\ge 0,    
\end{align}
where the sequence $\{(x^k,M_k)\}$ is generated by each respective algorithm, and $\{p_k\}$ is a sequence of positive scalars that will be specified separately for each case. We also define the following quantity for measuring the approximate first- and second-order stationarity of problem \eqref{ucpb}:
\begin{align}
\mu_\eta(x) := \max\Big\{\frac{1}{3}\|\nabla f(x)\|^{3/2}, -\frac{\eta^{3/2}}{4}\lambda_{\min}(\nabla^2 f(x))^3\Big\} \label{eq:mu_eta} 
\end{align}
for some $\eta>0$. 

The following lemma provides expansions for the cubed Frobenius norm, generalizing the well-known identity $\|U + V\|_F^2 = \|U\|_F^2 + 2\mathrm{Tr}(U^TV) + \|V\|_F^2$ and inequality $\|U + V\|_F^2\le (1+c)\|U\|_F^2+(1+1/c)\|V\|_F^2$ for all $U,V\in\mathbb{R}^{n\times n}$ and $c>0$. 

\begin{lemma}\label{lem:expand-nonsquare1}
For any $U,V\in\R^{n\times n}$, it holds that  
\begin{align}
&\|U+V\|_F^3 \le (1+c)\|U\|_F^3 + 3\|U\|_F\mathrm{Tr}(U^TV) + 2(1 + c^{-1/2})\|V\|_F^3\quad\forall c>0,\label{cubic-function-1}\\
&\|U+V\|_F^3 \le (1+2c)\|U\|_F^3 + 2(1 + c^{-1/2} + 2c^{-2})\|V\|_F^3\quad \forall c>0.\label{cubic-function-2}
\end{align}
\end{lemma}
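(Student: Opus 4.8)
The plan is to reduce both inequalities to a single one‑variable Taylor expansion along the segment joining $U$ and $U+V$, and then tidy up the resulting pure‑power cross terms with weighted Young inequalities. Concretely, set $a=\|U\|_F$, $b=\|V\|_F$, and $t=\mathrm{Tr}(U^TV)$, and define $\phi(s):=\|U+sV\|_F^3=q(s)^{3/2}$ for $s\in[0,1]$, where $q(s):=\|U+sV\|_F^2=a^2+2ts+s^2b^2$ is a nonnegative quadratic. As a squared norm, $q$ has nonnegative values and positive leading coefficient (assuming $b>0$; the case $b=0$ is trivial), so it has at most a double real root; hence $\phi$ is $C^2$ on $\mathbb{R}$ and the integral form of Taylor's theorem applies, $\phi(1)=\phi(0)+\phi'(0)+\int_0^1(1-s)\phi''(s)\,ds$. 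A direct computation gives $\phi(0)=a^3$ and $\phi'(0)=\tfrac32 q(0)^{1/2}q'(0)=3at=3\|U\|_F\,\mathrm{Tr}(U^TV)$, which already reproduces the linear cross term of \eqref{cubic-function-1}.

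The crux is to control the remainder. Differentiating twice yields $\phi''(s)=\tfrac34 q(s)^{-1/2}q'(s)^2+3q(s)^{1/2}b^2$, in which the factor $q^{-1/2}$ is the only delicate point. I would neutralize it with Cauchy–Schwarz: since $q'(s)=2\langle U+sV,\,V\rangle$, one has $q'(s)^2\le 4\|U+sV\|_F^2\,b^2=4q(s)b^2$, so the singular term is dominated by $3q(s)^{1/2}b^2$ and therefore $\phi''(s)\le 6q(s)^{1/2}b^2=6\|U+sV\|_F\,b^2\le 6(a+sb)b^2$ by the triangle inequality. This cancellation is what makes the bound finite near a root of $q$, and it also confirms the $C^2$ regularity used above. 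Integrating, using $\int_0^1(1-s)(a+sb)\,ds=\tfrac{a}{2}+\tfrac{b}{6}$, gives the clean intermediate estimate
\[
\|U+V\|_F^3\le a^3+3at+3ab^2+b^3.
\]

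It then remains to absorb the pure powers $ab^2$ and $a^2b$. For \eqref{cubic-function-1}, applying Young's inequality with exponents $3$ and $3/2$ and multiplier $c^{1/3}$ gives $3ab^2\le c\,a^3+2c^{-1/2}b^3$, and merging the $b^3$ terms via $1\le 2$ produces $(1+c)a^3+3at+2(1+c^{-1/2})b^3$, as claimed. For \eqref{cubic-function-2}, I would start from \eqref{cubic-function-1}, bound the cross term by $3at\le 3a|t|\le 3a^2b$ using $|t|\le ab$, and apply Young once more with exponents $3/2$ and $3$ and multiplier $(c/2)^{2/3}$, namely $3a^2b\le c\,a^3+4c^{-2}b^3$; collecting terms yields exactly $(1+2c)a^3+2(1+c^{-1/2}+2c^{-2})b^3$. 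The single genuine obstacle is the $q^{-1/2}$ blow‑up in $\phi''$ where $U+sV$ vanishes; once the Cauchy–Schwarz step removes it, the remainder of the argument is routine bookkeeping with Young's inequality.
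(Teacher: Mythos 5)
Your proof is correct, and it reaches the paper's key intermediate inequality by a more self-contained route than the one the paper takes. The paper vectorizes $U,V$, invokes an external result (\cite[Theorem 6.3]{rodomanov2020smoothness}) asserting that the Hessian of $w\mapsto\|w\|^3$ is $9$-Lipschitz, and then applies the cubic upper bound \eqref{ineq:2st-desc} to that function, arriving at $\|U+V\|_F^3\le\|U\|_F^3+3\|U\|_F\,\mathrm{Tr}(U^TV)+3\|U\|_F\|V\|_F^2+2\|V\|_F^3$. You instead restrict to the segment, write $\phi(s)=q(s)^{3/2}$ with $q$ a nonnegative quadratic, and bound $\phi''$ directly via Cauchy--Schwarz; this avoids the citation, explicitly settles the $C^2$ regularity at a possible double root of $q$ (the only delicate point, which you correctly identify and resolve), and even yields the slightly sharper intermediate constant $+\|V\|_F^3$ in place of the paper's $+2\|V\|_F^3$. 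From there the two arguments coincide: the Young steps $3ab^2\le ca^3+2c^{-1/2}b^3$ for \eqref{cubic-function-1}, and then $3at\le 3a^2b\le ca^3+4c^{-2}b^3$ for \eqref{cubic-function-2}, are exactly the paper's, and all of your exponent and multiplier choices check out. What the paper's version buys is brevity at the cost of an external dependency; what yours buys is a fully elementary one-variable computation with a marginally tighter intermediate bound.
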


\begin{proof}
Fix any $U,V\in\R^{n\times n}$. For convenience, we vectorize $U$ and $V$ by letting $u=\mathrm{vec}(U)\in\R^{n^2}$ and $v=\mathrm{vec}(V)\in\R^{n^2}$. Let $\phi(w):=\|w\|^3$ for all $w\in\R^{n^2}$. It follows from \cite[Theorem 6.3]{rodomanov2020smoothness} that 
\begin{align*}
\|\nabla^2\phi(w) - \nabla^2\phi(w^\prime)\| \le 9 \|w-w^\prime\|\quad\forall w,w^\prime\in\R^{n^2}.   
\end{align*}
By this and \eqref{ineq:2st-desc}, one has that  
\begin{align*}
\phi(u+v)\le \phi(u) + \nabla\phi(u)^Tv + \frac{1}{2}v^T \nabla^2\phi(u)v + \frac{3}{2}\|v\|^3.
\end{align*}
This together with $\phi(u)=\|u\|^3$, $\nabla\phi(u)= 3\|u\|u$, and $\nabla^2 \phi(u)=3(uu^T/\|u\| + \|u\|I)$ implies that
\begin{align}
&\|u+v\|^3 \le \|u\|^3 + 3\|u\|u^Tv + 3v^T (uu^T/\|u\| + \|u\|I)v/2 + 2\|v\|^3\nonumber\\
&\le \|u\|^3 + 3\|u\|u^Tv + 3 \|u\| \|v\|^2 + 2\|v\|^3 \le (1+c)\|u\|^3 + 3\|u\|u^Tv + 2(1 + c^{-1/2})\|v\|^3\quad \forall c>0,\label{rel-1}
\end{align} 
where the last inequality is due to the Young's inequality. Using again the Young's inequality and \eqref{rel-1}, we obtain that 
\begin{align}
\|u+v\|^3\le (1+2c)\|u\|^3 + 2(1 + c^{-1/2} + 2c^{-2})\|v\|^3\quad\forall c>0.\label{rel-2}
\end{align}
In view of \eqref{rel-1}, \eqref{rel-2}, $u=\mathrm{vec}(U)$, and $v=\mathrm{vec}(V)$, we see that \eqref{cubic-function-1} and \eqref{cubic-function-2} hold as desired.
\end{proof}

\subsection{Some properties of cubic subproblems}

In this subsection, we present some properties of the cubic regularized subproblem:
\begin{align}\label{eq:sub-cubic}
x^+ \in \underset{x^\prime \in \mathbb{R}^{n}}{\mathrm{Arg\,min}} \Big\{ g^T (x^\prime-x) + \frac{1}{2}(x^\prime-x)^TM(x^\prime-x) + \frac{1}{6\eta}\|x^\prime-x\|^3\Big\}   
\end{align}
for given $x\in\R^n$, $M\in\R^{n\times n}$, and $\eta>0$. The first- and second-order optimality condition of \eqref{eq:sub-cubic} yield
\begin{align}
g + M(x^+-x) + \frac{1}{2\eta}\|x^+-x\|(x^+-x) = 0,\quad M + \frac{1}{2\eta}\|x^+ - x\|I \succeq 0.\label{eq:second-cond}
\end{align}

The next lemma provides an upper bound for the first- and second-order stationary measure at $x^+$, which can be seen as an inexact variant of \cite[Lemma 5]{nesterov2006cubic}.

\begin{lemma}\label{lem:bound-fsp}
Suppose that Assumption \ref{asp:basic} holds. Assume that $\eta\in(0,(2L)^{-1})$ holds, where $L$ is given in Assumption \ref{asp:basic}(b). Let $x\in\R^n$ and $M\in\R^{n\times n}$ be given, and let $x^+$ be a solution to \eqref{eq:sub-cubic}. Then,
\begin{align}
\|\nabla f(x^+)\|^{3/2} & \le \frac{3}{\eta^{3/2}}\|x^+-x\|^3 + \frac{3\eta^{3/2}}{4}\|M-\nabla^2 f(x)\|_F^3 + 3\|g-\nabla f(x)\|^{3/2},\label{ineq:grad}\\
- \eta^{3/2}\lambda_{\min}(\nabla^2f(x^+))^3 & \le \frac{4}{\eta^{3/2}}\|x^+-x\|^3 + 4\eta^{3/2}\|\nabla^2f(x)-M\|_F^3.\label{ineq:lambda}
\end{align}
Consequently, one has 
\begin{align}
\mu_\eta(x^+) \le {\eta^{-3/2}}\|x^+ - x\|^3 + \eta^{3/2}\|M - \nabla^2 f(x)\|_F^3 + \|g-\nabla f(x)\|^{3/2},\label{ineq:mu}  
\end{align}
where $\mu_\eta$ is defined in \eqref{eq:mu_eta}.
\end{lemma}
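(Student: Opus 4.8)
The plan is to read off everything from the two optimality conditions in \eqref{eq:second-cond} together with the first-order Lipschitz estimate \eqref{ineq:1st-desc}. Throughout I would abbreviate $s=x^+-x$ and $r=\|s\|$. For the gradient bound \eqref{ineq:grad}, the key idea is to insert the quantity $g+Ms$ into $\nabla f(x^+)$, since the first-order condition forces $g+Ms=-\tfrac{r}{2\eta}s$. This gives the decomposition
\[
\nabla f(x^+) = \big[\nabla f(x^+)-\nabla f(x)-\nabla^2 f(x)s\big] + \big[\nabla f(x)-g\big] + \big[\nabla^2 f(x)-M\big]s - \tfrac{r}{2\eta}s.
\]
Taking norms, the first bracket is at most $\tfrac{L}{2}r^2$ by \eqref{ineq:1st-desc}, the third is at most $\|\nabla^2 f(x)-M\|_F\,r$ (bounding the spectral norm by the Frobenius norm), and the last term contributes $\tfrac{r^2}{2\eta}$. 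Since $\eta<(2L)^{-1}$ yields $\tfrac{L}{2}<\tfrac{1}{4\eta}$, the two $r^2$ terms combine to at most $\tfrac{3r^2}{4\eta}$, leaving $\|\nabla f(x^+)\|\le \tfrac{3r^2}{4\eta}+\|\nabla^2 f(x)-M\|_F\,r+\|\nabla f(x)-g\|$.

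The main obstacle is then the coefficient chase needed to raise this to the power $3/2$ and match the stated constants. I would apply the convexity inequality $(a+b+c)^{3/2}\le\sqrt{3}\,(a^{3/2}+b^{3/2}+c^{3/2})$, which produces a mixed term $\|\nabla^2 f(x)-M\|_F^{3/2}r^{3/2}$. Splitting this with a weighted Young's inequality $uv\le\tfrac{\lambda}{2}u^2+\tfrac{1}{2\lambda}v^2$ and choosing $\lambda=\tfrac{\sqrt{3}}{2}\eta^{3/2}$ makes the coefficient on $\|M-\nabla^2 f(x)\|_F^3$ exactly $\tfrac{3\eta^{3/2}}{4}$; the surviving $r^3$ contributions then total $\tfrac{17}{8\eta^{3/2}}\le\tfrac{3}{\eta^{3/2}}$, while the gradient-error term carries coefficient $\sqrt{3}\le 3$. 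This delivers \eqref{ineq:grad}.

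The eigenvalue bound \eqref{ineq:lambda} is cleaner and proceeds through Loewner inequalities. The second-order condition gives $M\succeq-\tfrac{r}{2\eta}I$; combining this with $\nabla^2 f(x)\succeq M-\|\nabla^2 f(x)-M\|_F\,I$ and the spectral Lipschitz bound $\nabla^2 f(x^+)\succeq\nabla^2 f(x)-Lr\,I$ yields
\[
-\lambda_{\min}(\nabla^2 f(x^+))\le \tfrac{r}{2\eta}+Lr+\|\nabla^2 f(x)-M\|_F\le \tfrac{r}{\eta}+\|\nabla^2 f(x)-M\|_F,
\]
where the last step again uses $\eta<(2L)^{-1}$. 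If $\lambda_{\min}(\nabla^2 f(x^+))\ge 0$ the claim is immediate since its right-hand side is nonnegative; otherwise $-\lambda_{\min}(\nabla^2 f(x^+))^3=(-\lambda_{\min}(\nabla^2 f(x^+)))^3$, and cubing the display via $(u+v)^3\le 4(u^3+v^3)$ and multiplying by $\eta^{3/2}$ gives \eqref{ineq:lambda} directly.

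Finally, \eqref{ineq:mu} follows by taking the maximum in the definition \eqref{eq:mu_eta}. Dividing \eqref{ineq:grad} by $3$ and \eqref{ineq:lambda} by $4$, each of the two quantities inside $\mu_\eta(x^+)$ is bounded by $\eta^{-3/2}r^3+\eta^{3/2}\|M-\nabla^2 f(x)\|_F^3+\|g-\nabla f(x)\|^{3/2}$: the eigenvalue quantity simply drops the nonnegative gradient-error contribution and satisfies $\tfrac{\eta^{3/2}}{4}\le\eta^{3/2}$, so its bound is dominated by the same right-hand side. Hence the maximum obeys \eqref{ineq:mu}. The only genuinely delicate part is the coefficient bookkeeping in the gradient estimate; the remaining steps are routine applications of the optimality conditions and the two Lipschitz estimates.
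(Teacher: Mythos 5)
Your proof is correct and follows essentially the same route as the paper: the same decomposition of $\nabla f(x^+)$ via the optimality conditions \eqref{eq:second-cond} and the Lipschitz estimates, the same Loewner argument for the eigenvalue bound, and the same final comparison for $\mu_\eta$. The only (cosmetic) difference is that in the gradient bound the paper applies Young's inequality to the cross term $\|M-\nabla^2 f(x)\|\,\|x^+-x\|$ before raising to the power $3/2$, whereas you raise to the power $3/2$ first and then split the resulting $3/2$-power cross term; both yield constants dominated by those in \eqref{ineq:grad}.
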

\begin{proof}
Using \eqref{ineq:1st-desc} with $y=x^+$, we obtain that 
\begin{align*}
\| \nabla f(x^+) - g - M(x^{+} - x) + g - \nabla f(x) + (M -\nabla^2 f(x))(x^+-x) \|  \le \frac{L}{2}\|x^+-x\|^2.
\end{align*}
This along with the first relation in \eqref{eq:second-cond} implies that
\begin{align*}
\|\nabla f(x^+)\| & \le \frac{L}{2}\|x^+-x\|^2 + \|g + M(x^{+}-x)\| + \|g-\nabla f(x)\| + \|(M-\nabla^2f(x))(x^+-x)\|\\ 
&\overset{\eqref{eq:second-cond}}{=} \Big(\frac{L}{2} + \frac{1}{2\eta}\Big)\|x^+-x\|^2 + \|g-\nabla f(x)\| + \|(M-\nabla^2f(x))(x^+-x)\|\\
&\le \frac{3}{4\eta}\|x^+-x\|^2 + \|g-\nabla f(x)\| + \|M-\nabla^2 f(x)\|\|x^+-x\| \\
& \le \frac{5}{4\eta}\|x^+-x\|^2 + \frac{\eta}{2}\|M-\nabla^2 f(x)\|^2 + \|g-\nabla f(x)\|
\end{align*}
where the second inequality is due to the spectral norm inequality and $L\le 1/(2\eta)$, and the third inequality follows from the Young's inequality. This inequality further implies that 
\begin{align*}
\|\nabla f(x^+)\|^{3/2} & \le \Big(\frac{5}{4\eta}\|x^+-x\|^2 + \frac{\eta}{2}\|M-\nabla^2 f(x)\|^2 + \|g-\nabla f(x)\|\Big)^{3/2}\\
& \le \sqrt{3}\Big(\Big(\frac{5}{4\eta}\Big)^{3/2}\|x^+-x\|^3 + \Big(\frac{\eta}{2}\Big)^{3/2}\|M-\nabla^2 f(x)\|^3 + \|g-\nabla f(x)\|^{3/2}\Big)\\
& \le \frac{3}{\eta^{3/2}}\|x^+-x\|^3 + \frac{3\eta^{3/2}}{4}\|M-\nabla^2 f(x)\|^3 + 3\|g-\nabla f(x)\|^{3/2},
\end{align*}
where the second inequality is due to $(a+b+c)^{3/2}\le \sqrt{3}(a^{3/2}+b^{3/2}+c^{3/2})$ for all $a,b,c\ge0$. This together with the fact that the spectral norm of a matrix is bounded above by the Frobenius norm proves \eqref{ineq:grad} as desired.

We next prove \eqref{ineq:lambda}. Using the Lipschitz continuity of $\nabla^2 f$ and \eqref{eq:second-cond}, we obtain that
\begin{align*}
\nabla^2 f(x^+) & \succeq \nabla^2 f(x) - L\|x^+-x\| I \succeq M - \|M - \nabla^2 f(x)\|I - L\|x^+-x\| I \\ 
& \overset{\eqref{eq:second-cond}}{\succeq} -((2\eta)^{-1}+L)\|x^+-x\|I - \|M - \nabla^2 f(x)\|I \succeq \eta^{-1}\|x^+-x\|I - \|M - \nabla^2 f(x)\|I,
\end{align*}
where the last relation is due to $L\le (2\eta)^{-1}$. It then follows that 
\begin{align*}
-\lambda_{\min}(\nabla^2 f(x^+))^3 \le  (\eta^{-1} \|x^+-x\| + \|M - \nabla^2 f(x)\|)^3 \le 4\eta^{-3} \|x^+-x\|^3 + 4\|M - \nabla^2 f(x)\|^3,
\end{align*}
where the second inequality is due to $(a+b)^{3}\le 4a^3 + 4b^3$ for all $a,b\ge0$. In view of the above inequality and the fact that the spectral norm of a matrix is bounded above by the Frobenius norm, we obtain that \eqref{ineq:lambda} holds. 

Combining \eqref{ineq:grad} and \eqref{ineq:lambda} with the definition of $\mu_\eta$ in \eqref{eq:mu_eta}, we obtain that \eqref{ineq:mu} holds, which completes the proof of this lemma.
\end{proof}

We next show that solving a cubic regularized subproblem yields a descent property of $f$.

\begin{lemma}\label{lem:ppt-desc}
Suppose that Assumption \ref{asp:basic} holds. Assume that $\eta\in(0,(2L)^{-1})$ holds, where $L$ is given in Assumption \ref{asp:basic}(b). Let $x\in\R^n$ and $M\in\R^{n\times n}$ be given, and let $x^+$ be a solution to \eqref{eq:sub-cubic}. Then,
\begin{align}\label{lem:descent}
f(x^+) \le f(x) - \frac{1}{9\eta}\|x^+-x\|^3 + 24\eta^2\|\nabla^2 f(x)-M\|_{F}^3 + 3\eta^{1/2} \|\nabla f(x)-g\|^{3/2}.
\end{align}
\end{lemma}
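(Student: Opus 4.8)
The plan is to start from the cubic upper bound \eqref{ineq:2st-desc} with $y=x^+$ and then replace the exact derivatives $\nabla f(x)$ and $\nabla^2 f(x)$ by the surrogates $g$ and $M$, paying for each substitution with a cross term that is later tamed by Young's inequality. Writing $s:=x^+-x$, the bound \eqref{ineq:2st-desc} reads
\[
f(x^+)\le f(x)+\nabla f(x)^T s+\tfrac12 s^T\nabla^2 f(x)s+\tfrac{L}{6}\|s\|^3 .
\]
Splitting $\nabla f(x)^T s=g^T s+(\nabla f(x)-g)^T s$ and $s^T\nabla^2 f(x)s=s^T M s+s^T(\nabla^2 f(x)-M)s$ isolates a ``surrogate'' contribution $g^T s+\tfrac12 s^T M s$ together with the two error cross terms $(\nabla f(x)-g)^T s$ and $\tfrac12 s^T(\nabla^2 f(x)-M)s$.

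The heart of the argument is to show that the surrogate contribution is strongly negative, and here I would use both relations in \eqref{eq:second-cond} simultaneously. Taking the inner product of the stationarity equation with $s$ gives $g^T s+s^T M s+\tfrac{1}{2\eta}\|s\|^3=0$, hence $g^T s+\tfrac12 s^T M s=-\tfrac12 s^T M s-\tfrac{1}{2\eta}\|s\|^3$. The positive semidefinite condition $M+\tfrac{1}{2\eta}\|s\|I\succeq0$ yields $s^T M s\ge-\tfrac{1}{2\eta}\|s\|^3$, so $-\tfrac12 s^T M s\le\tfrac{1}{4\eta}\|s\|^3$ and the surrogate part is at most $-\tfrac{1}{4\eta}\|s\|^3$. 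Using $L\le(2\eta)^{-1}$ to bound $\tfrac{L}{6}\|s\|^3\le\tfrac{1}{12\eta}\|s\|^3$ then collapses the clean terms into a pure cubic descent $-\tfrac{1}{6\eta}\|s\|^3$.

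It remains to absorb the two cross terms into this cubic budget. Cauchy--Schwarz and the bound of the spectral norm by the Frobenius norm give $(\nabla f(x)-g)^T s\le\|\nabla f(x)-g\|\,\|s\|$ and $\tfrac12 s^T(\nabla^2 f(x)-M)s\le\tfrac12\|\nabla^2 f(x)-M\|_F\,\|s\|^2$. I would then apply Young's inequality $ab\le a^p/p+b^q/q$ to each: for the Hessian term pair $\|s\|^2$ (exponent $3/2$) with $\|\nabla^2 f(x)-M\|_F$ (exponent $3$), and for the gradient term pair $\|s\|$ (exponent $3$) with $\|\nabla f(x)-g\|$ (exponent $3/2$). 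Pinning the scaling weights so that the error coefficients are exactly $24\eta^2$ and $3\eta^{1/2}$ forces the two leftover $\|s\|^3$ coefficients to equal $\tfrac{1}{36\eta}$ and $\tfrac{4}{243\eta}$. Since $\tfrac{1}{6\eta}-\tfrac{1}{36\eta}-\tfrac{4}{243\eta}=\tfrac{119}{972\eta}\ge\tfrac{108}{972\eta}=\tfrac{1}{9\eta}$, the residual descent coefficient is at least $\tfrac{1}{9\eta}$, which is exactly \eqref{lem:descent}.

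The only delicate point is the constant bookkeeping in the final step: the Young weights are determined by the demand to reproduce the precise constants $24\eta^2$ and $3\eta^{1/2}$, after which one must check that the cubic budget $\tfrac{1}{6\eta}$ manufactured by the optimality conditions still leaves the claimed $\tfrac{1}{9\eta}$ once both Young remainders are subtracted. Conceptually, however, the essential and most error-prone move is the joint use of the two relations in \eqref{eq:second-cond} to convert the quadratic-plus-linear surrogate part into a clean cubic descent; everything after that is routine scalar estimation.
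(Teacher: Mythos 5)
Your proposal is correct and follows essentially the same route as the paper's proof: the cubic upper bound \eqref{ineq:2st-desc}, the same splitting into surrogate and error cross terms, the joint use of both relations in \eqref{eq:second-cond} to extract the $-\tfrac{1}{6\eta}\|s\|^3$ descent, and Young's inequality to absorb the cross terms. Your final constant bookkeeping ($\tfrac{1}{36\eta}$ and $\tfrac{4}{243\eta}$ leftover coefficients, summing to leave at least $\tfrac{119}{972\eta}\ge\tfrac{1}{9\eta}$) is in fact carried out more carefully than the paper's own stated Young forms, which only yield $\tfrac{1}{12\eta}$ as written.
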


\begin{proof} 
It follows from \eqref{eq:second-cond} that 
\begin{align}
g^T(x^+-x) & = - (x^+-x)^T M(x^+-x) - \frac{1}{2\eta}\|x^+-x\|^3,\label{ineq:combine-condition}\\
-(x^+-x)^TM(x^+-x) & \le \frac{1}{2\eta}\|x^+-x\|^3.\label{ineq:combine-condition-1}
\end{align}
Using these and \eqref{ineq:2st-desc} with $y=x^+$, we obtain that
\begin{align*}
&f(x^+) \overset{\eqref{ineq:2st-desc}}{\le} f(x) + \nabla f(x)^T(x^{+}-x) + \frac{1}{2}(x^+-x)^T\nabla^2 f(x)(x^+-x)+ \frac{L}{6}\|x^+-x\|^3\\
& = f(x) + g^T(x^+-x) + \frac{1}{2}(x^+-x)^T M(x^+-x) + \frac{L}{6}\|x^+-x\|^3\\
&\qquad + (\nabla f(x)-g)^{T}(x^+-x) + \frac{1}{2}(x^+-x)^T(\nabla^2 f(x) - M )(x^+-x)\\
& \overset{\eqref{ineq:combine-condition}}{=} f(x) - \frac{1}{2}(x^+-x)^T M(x^+-x) - \Big(\frac{1}{2\eta}- \frac{L}{6}\Big)\|x^+-x\|^3\\
&\qquad + (\nabla f(x)-g)^{T}(x^+-x) + \frac{1}{2}(x^+-x)^T(\nabla^2 f(x) - M)(x^+-x)\\
& \overset{\eqref{ineq:combine-condition-1}}{\le} f(x) - \Big(\frac{1}{4\eta}-\frac{L}{6}\Big)\|x^+-x\|^3 + (\nabla f(x)-g)^{T}(x^+-x)  + \frac{1}{2}(x^+-x)^T(\nabla^2 f(x) - M)(x^+-x)\\
& \le f(x) - \frac{1}{6\eta}\|x^+-x\|^3 + \|x^+-x\|\|\nabla f(x)-g\| + \frac{1}{2}\|x^+-x\|^2\|\nabla^2 f(x) - M\|\\
&\le f(x) - \frac{1}{9\eta}\|x^+-x\|^3 + 24\eta^2\|\nabla^2 f(x)-M\|^3 + 3\eta^{1/2} \|\nabla f(x)-g\|^{3/2} ,
\end{align*}
where the third inequality is due to $L \le (2\eta)^{-1}$ and the spectral norm inequality, and the last inequality is due to Young's inequality in two forms: $ab\le a^{3}/(36\eta) + 2\sqrt{12}\eta^{1/2} b^{3/2}/3$ and $ab\le a^{3/2}/(18\eta) + 48\eta^2 b^3$ for all $a,b>0$. In view of the above inequality and the fact that the spectral norm of a matrix is bounded above by the Frobenius norm, we obtain that this lemma holds as desired. 
\end{proof}

\subsection{Proof of the main results in Section \ref{sec:scn-pm}}\label{subsec:proof-pm}
In this subsection, we present some technical lemmas and then use them to prove Theorem \ref{th:complexity-pm-c}. The following lemma gives the recurrence for the estimation error of the Hessian estimators $\{M_k\}$ generated by Algorithm \ref{alg:unf-ssom-pm}. 
\begin{lemma}\label{lem:rec-Hes-pm}
Suppose that Assumption \ref{asp:basic} holds. Let $\{(x^k,M_k)\}$ be the sequence generated by Algorithm \ref{alg:unf-ssom-pm} with momentum parameters $\{\theta_k\}$. Then, it holds that for all $k\ge0$,
\begin{align}
\E_{\xi^{k+1}}[\|M_{k+1} - \nabla^2 f(x^{k+1})\|_F^3]\le (1-\theta_k)\|M_k-\nabla^2 f(x^k)\|_F^3 + 21L_{F}^3\theta_k^{-2}\|x^{k+1}-x^k\|^3 + 5\sigma^3\theta_k^{5/2},\label{hessian-vr}
\end{align}
where $L_F$ and $\sigma$ are given in Assumption \ref{asp:basic}.
\end{lemma}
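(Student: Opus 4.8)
The plan is to propagate the Hessian estimation error one step through the Polyak update \eqref{update-Mk-pm}, which at index $k+1$ reads $M_{k+1} = (1-\theta_k)M_k + \theta_k H(x^{k+1};\xi^{k+1})$, and to arrange the decomposition so that the only randomness in $\xi^{k+1}$ lives in a single zero-mean term. Concretely, I would write
\[
\begin{aligned}
M_{k+1} - \nabla^2 f(x^{k+1}) = {} & \underbrace{(1-\theta_k)\bigl(M_k - \nabla^2 f(x^k)\bigr)}_{U} + \underbrace{(1-\theta_k)\bigl(\nabla^2 f(x^k) - \nabla^2 f(x^{k+1})\bigr)}_{V_1} \\
& + \underbrace{\theta_k\bigl(H(x^{k+1};\xi^{k+1}) - \nabla^2 f(x^{k+1})\bigr)}_{V_2},
\end{aligned}
\]
where $U$ is the carried-over error to be contracted, $V_1$ is a drift term controlled by the Lipschitz continuity of $\nabla^2 f$, and $V_2$ is the fresh sampling noise. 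The crucial structural fact is that $x^{k+1}$ is determined before $\xi^{k+1}$ is drawn, so conditional on the history through $x^{k+1}$ the matrices $U$ and $V_1$ are deterministic, while $\E_{\xi^{k+1}}[V_2] = 0$ by the unbiasedness in Assumption \ref{asp:basic}(d).

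Next I would apply the cubic expansion \eqref{cubic-function-1} to the pair $(U+V_1, V_2)$ with parameter $c_1 > 0$ and take $\E_{\xi^{k+1}}$. Because $U+V_1$ is deterministic and $V_2$ has conditional mean zero, the cross term $3\|U+V_1\|_F\,\mathrm{Tr}\bigl((U+V_1)^T V_2\bigr)$ vanishes in expectation, yielding
\[
\E_{\xi^{k+1}}\bigl[\|M_{k+1}-\nabla^2 f(x^{k+1})\|_F^3\bigr] \le (1+c_1)\|U+V_1\|_F^3 + 2\bigl(1+c_1^{-1/2}\bigr)\E_{\xi^{k+1}}\bigl[\|V_2\|_F^3\bigr],
\]
and the last expectation is at most $\theta_k^3\sigma^3$ by the third-moment bound in \eqref{ineq:H-Variance}. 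I would then split $\|U+V_1\|_F^3$ with the second expansion \eqref{cubic-function-2} (parameter $c_2 > 0$), use $\|U\|_F^3 = (1-\theta_k)^3\|M_k-\nabla^2 f(x^k)\|_F^3$, and bound $\|V_1\|_F^3 \le (1-\theta_k)^3 L_F^3\|x^{k+1}-x^k\|^3$ via the Frobenius Lipschitz estimate in Assumption \ref{asp:basic}(b).

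The remaining work is purely a matter of forcing the resulting three coefficients into the claimed form, which I would do by setting $c_1 = c_2 = \theta_k/2$. With this choice the contraction factor becomes $(1+\tfrac{\theta_k}{2})(1+\theta_k)(1-\theta_k)^3 \le (1-\theta_k)$, using $(1+\tfrac{\theta_k}{2})(1+\theta_k)\le 1+2\theta_k \le (1-\theta_k)^{-2}$ for $\theta_k\in(0,1)$; the coefficient of $L_F^3\|x^{k+1}-x^k\|^3$ is $2(1+\sqrt2\,\theta_k^{-1/2}+8\theta_k^{-2})(1+\tfrac{\theta_k}{2})(1-\theta_k)^3 \le (18+2\sqrt2)\theta_k^{-2} \le 21\theta_k^{-2}$; and the coefficient of $\sigma^3$ is $2(1+\sqrt2\,\theta_k^{-1/2})\theta_k^3 \le (2+2\sqrt2)\theta_k^{5/2}\le 5\theta_k^{5/2}$, where I repeatedly collapse lower powers of $\theta_k$ into the dominant one using $\theta_k<1$. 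The main obstacle is not any individual estimate but this bookkeeping: the grouping must be chosen so the cross term cancels exactly (hence $V_1$ is placed with $U$ rather than with $V_2$), and $c_1,c_2$ must be tied to $\theta_k$ so that the spare factor $(1-\theta_k)^3$ simultaneously absorbs the multiplicative overhead of both expansions to produce the clean contraction $(1-\theta_k)$ and the exact constants $21$ and $5$.
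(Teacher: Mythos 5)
Your proposal is correct and follows essentially the same route as the paper: the identical three-term decomposition, the same use of \eqref{cubic-function-1} on $(U+V_1,V_2)$ so that unbiasedness kills the cross term, followed by \eqref{cubic-function-2} on $U+V_1$, and the same final constants $21$ and $5$. The only cosmetic difference is your choice $c_1=c_2=\theta_k/2$ versus the paper's $c=\theta_k/(2(1-\theta_k))$; both choices make the factor $(1-\theta_k)^3$ absorb the multiplicative overhead and yield the contraction $(1-\theta_k)$.
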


\begin{proof}
Fix any $k\ge0$. It follows from \eqref{update-Mk-pm} that 
\begin{align}
&M_{k+1}-\nabla^2f(x^{k+1})  \overset{\eqref{update-Mk-pm}}{=} (1 - \theta_{k}) M_{k} + \theta_{k} H(x^{k+1};\xi^{k+1}) - \nabla^2 f(x^{k+1})\nonumber\\
& = (1-\theta_k)(M_k-\nabla^2 f(x^k)) + (1-\theta_k)(\nabla^2 f(x^k) - \nabla^2 f(x^{k+1})) + \theta_k(H(x^{k+1};\xi^{k+1}) - \nabla^2 f(x^{k+1})).\label{a-identity-pm}
\end{align}
Observe from Assumption \ref{asp:basic} that $\|\nabla^2 f(x^{k+1}) - \nabla^2 f(x^k)\|_F\le L_F\|x^{k+1}-x^k\|$, $\E_{\xi^{k+1}}[H(x^{k+1};\xi^{k+1})]=\nabla^2 f(x^{k+1})$ and $\E_{\xi^{k+1}}[\|H(x^{k+1};\xi^{k+1}) -\nabla^2 f(x^{k+1})\|^3_F]\le\sigma^3$. Using these, \eqref{cubic-function-1}, \eqref{cubic-function-2}, and \eqref{a-identity-pm}, we obtain that for all $c>0$,
\begin{align}
&\E_{\xi^{k+1}}[\|M_{k+1} - \nabla^2 f(x^{k+1})\|_F^3] 
\nonumber\\ 
&\overset{\eqref{a-identity-pm}}{=}\E_{\xi^{k+1}}\big[\|(1-\theta_k)(M_k-\nabla^2 f(x^k)) + (1-\theta_k)(\nabla^2 f(x^k) - \nabla^2 f(x^{k+1})) + \theta_k(H(x^{k+1};\xi^{k+1}) - \nabla^2 f(x^{k+1}))\|_F^3\big]\nonumber\\
& \overset{\eqref{cubic-function-1}}{\le} (1+c)\|(1-\theta_k)(M_k-\nabla^2 f(x^k)) + (1-\theta_k)(\nabla^2 f(x^k) - \nabla^2 f(x^{k+1}))\|^3_F\nonumber\\ 
&\qquad + 2(1 + c^{-1/2})\E_{\xi^{k+1}}\big[\|\theta_k(H(x^{k+1};\xi^{k+1}) - \nabla^2 f(x^{k+1}))\|_F^3\big]\nonumber\\
& \overset{\eqref{cubic-function-2}}{\le} (1+c)(1+2c)(1-\theta_k)^3\|M_k-\nabla^2 f(x^k)\|_F^3\nonumber \\
&\qquad + 2(1+c)(1 + c^{-1/2} + 2c^{-2})(1-\theta_k)^3\|\nabla^2 f(x^{k+1}) - \nabla^2 f(x^k)\|_F^3\nonumber\\
&\qquad + 2(1 + c^{-1/2})\theta_k^3\E_{\xi^{k+1}}\big[\|H(x^{k+1};\xi^{k+1}) - \nabla^2 f(x^{k+1})\|_F^3\big] \nonumber\\
& \le (1+c)(1+2c)(1-\theta_k)^3\|M_k-\nabla^2 f(x^k)\|_F^3 + 2(1+c)(1 + c^{-1/2} + 2c^{-2})(1-\theta_k)^3L_{F}^3\|x^{k+1}-x^k\|^3\nonumber\\
&\qquad + 2(1 + c^{-1/2})\sigma^3\theta_k^3,\label{raw-rec-bd}
\end{align} 
where the first inequality is due to \eqref{cubic-function-1} and $\E_{\xi^{k+1}}[H(x^{k+1};\xi^{k+1})]=\nabla^2 f(x^{k+1})$, the second inequality follows from \eqref{cubic-function-2}, and the last inequality follows from $\|\nabla^2 f(x^{k+1}) - \nabla^2 f(x^k)\|_F\le L_F\|x^{k+1}-x^k\|$ and $\E_{\xi^{k+1}}[\|H(x^{k+1};\xi^{k+1}) -\nabla^2 f(x^{k+1})\|^3_F]\le\sigma^3$. 

Letting $c = \theta_k/(2(1-\theta_k))$ in \eqref{raw-rec-bd} and using $\theta_k\in(0,1)$, we obtain that $c^{-1/2} = (2(1-\theta_k)/\theta_k)^{1/2} \le  \sqrt{2}\theta_k^{-1/2}$ and $c^{-2} = 4(1-\theta_k)^2/\theta_k^2 \le 4\theta_k^{-2}$. Combining these with \eqref{raw-rec-bd}, we obtain that 
\begin{align*}
&\E_{\xi^{k+1}}\big[\|M_{k+1} - \nabla^2 f(x^{k+1})\|_F^3\big]\le (1-\theta_k/2)(1-\theta_k)\|M_k-\nabla^2 f(x^k)\|_F^3\\
&\qquad + 2(1-\theta_k/2)(1-\theta_k)^2(1 + \sqrt{2}\theta_k^{-1/2} + 8\theta_k^{-2})L_{F}^3\|x^{k+1}-x^k\|^3 + 2(1 + \sqrt{2}\theta_k^{-1/2})\sigma^3\theta_k^3 \\
&\le (1-\theta_k)\|M_k-\nabla^2 f(x^k)\|_F^3 + 21 L_{F}^3\theta_k^{-2}\|x^{k+1}-x^k\|^3 + 5\sigma^3\theta_k^{5/2},
\end{align*}
where the second inequality is due to $\theta_k\in(0,1)$. Hence, the conclusion of this lemma holds as desired.
\end{proof}

The following lemma establishes a descent property for the potential sequence $\{\mathcal{P}_k\}$ defined below.

\begin{lemma}\label{th:potential-pm}
Suppose that Assumption \ref{asp:basic} holds. Let $\{(x^{k},M_k)\}$ be the sequence generated by Algorithm \ref{alg:unf-ssom-pm} with input parameters $\{(\eta_k,\theta_k)\}$. Assume that $\{\eta_k\}\subset(0,(2L)^{-1})$ and $\{\theta_k\}\subset(0,1)$, where $L$ is given in Assumption \ref{asp:basic}(b). Let $\{\mathcal{P}_k\}$ be defined in \eqref{def:pot} for $\{(x^k,M_k)\}$ and any positive sequence $\{p_k\}$ satisfying
\begin{align}\label{pkpm}
p_{k+1}= \frac{\theta_k^2}{378L_{F}^3\eta_k},\quad \frac{433\eta_k^2}{18}
+ (1-\theta_k)p_{k+1}\le p_k \quad \forall k\ge0,
\end{align}
where $L_F$ is given in Assumption \ref{asp:basic}(b). Then, it holds that
\begin{align}\label{eq:pk-pm}
\E_{\xi^{k+1}}[\mathcal{P}_{k+1}] \le \mathcal{P}_k - \eta_k^{1/2}\mu_{\eta_{k}}(x^{k+1})/18 + 55\eta_{k}^{1/2}\|g^k - \nabla f(x^k)\|^{3/2}/18 +  5\sigma^3\theta_k^{5/2}p_{k+1} \quad \forall k\ge 0,
\end{align}
where $\sigma$ is given in Assumption \ref{asp:basic}(c), and $\mu_{\eta}$ is defined in \eqref{eq:mu_eta}.
\end{lemma}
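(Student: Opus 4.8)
The plan is to establish the potential descent \eqref{eq:pk-pm} by combining the function-value descent from Lemma \ref{lem:ppt-desc}, the Hessian-error recursion from Lemma \ref{lem:rec-Hes-pm}, and the stationarity bound from Lemma \ref{lem:bound-fsp}, with the coefficient constraints \eqref{pkpm} chosen precisely to make the troublesome $\|x^{k+1}-x^k\|^3$ terms cancel. First I would write out $\E_{\xi^{k+1}}[\mathcal{P}_{k+1}]$ using the definition \eqref{def:pot}, namely $\E_{\xi^{k+1}}[f(x^{k+1})] + p_{k+1}\E_{\xi^{k+1}}[\|M_{k+1}-\nabla^2 f(x^{k+1})\|_F^3]$. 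Note that $x^{k+1}$, $g^k$, and $M_k$ are all determined before drawing $\xi^{k+1}$, so $f(x^{k+1})$ passes through the expectation unchanged; then I apply Lemma \ref{lem:descent} with $(x,M,g,\eta)=(x^k,M_k,g^k,\eta_k)$ to bound $f(x^{k+1})$, and apply Lemma \ref{lem:rec-Hes-pm} to bound the second expectation.

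Next I would collect the resulting terms. The function-value bound contributes $-\tfrac{1}{9\eta_k}\|x^{k+1}-x^k\|^3 + 24\eta_k^2\|\nabla^2 f(x^k)-M_k\|_F^3 + 3\eta_k^{1/2}\|\nabla f(x^k)-g^k\|^{3/2}$, while $p_{k+1}$ times the Hessian recursion contributes $p_{k+1}(1-\theta_k)\|M_k-\nabla^2 f(x^k)\|_F^3 + 21L_F^3\theta_k^{-2}p_{k+1}\|x^{k+1}-x^k\|^3 + 5\sigma^3\theta_k^{5/2}p_{k+1}$. The key cancellation is the $\|x^{k+1}-x^k\|^3$ terms: the first constraint $p_{k+1}=\theta_k^2/(378L_F^3\eta_k)$ forces $21L_F^3\theta_k^{-2}p_{k+1}=\tfrac{1}{18\eta_k}$, so the combined coefficient of $\|x^{k+1}-x^k\|^3$ becomes $-\tfrac{1}{9\eta_k}+\tfrac{1}{18\eta_k}=-\tfrac{1}{18\eta_k}$, leaving a strictly negative $-\tfrac{1}{18\eta_k}\|x^{k+1}-x^k\|^3$ to spare. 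Meanwhile the Frobenius-error terms $24\eta_k^2 + p_{k+1}(1-\theta_k)$ must be dominated by $p_k$; the second constraint in \eqref{pkpm} (with $433/18>24$ giving room) guarantees $24\eta_k^2\|M_k-\nabla^2 f(x^k)\|_F^3 + p_{k+1}(1-\theta_k)\|M_k-\nabla^2 f(x^k)\|_F^3\le p_k\|M_k-\nabla^2 f(x^k)\|_F^3$, which is exactly the $p_k\|M_k-\nabla^2 f(x^k)\|_F^3$ sitting inside $\mathcal{P}_k$.

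At this stage I would have $\E_{\xi^{k+1}}[\mathcal{P}_{k+1}]\le \mathcal{P}_k -\tfrac{1}{18\eta_k}\|x^{k+1}-x^k\|^3 + 3\eta_k^{1/2}\|\nabla f(x^k)-g^k\|^{3/2} + 5\sigma^3\theta_k^{5/2}p_{k+1}$. The remaining step is to convert the spare negative term $-\tfrac{1}{18\eta_k}\|x^{k+1}-x^k\|^3$ into the stationarity measure $-\tfrac{1}{18}\eta_k^{1/2}\mu_{\eta_k}(x^{k+1})$. For this I invoke inequality \eqref{ineq:mu} of Lemma \ref{lem:bound-fsp} (applicable since $\eta_k<(2L)^{-1}$), which reads $\mu_{\eta_k}(x^{k+1})\le \eta_k^{-3/2}\|x^{k+1}-x^k\|^3 + \eta_k^{3/2}\|M_k-\nabla^2 f(x^k)\|_F^3 + \|g^k-\nabla f(x^k)\|^{3/2}$; rearranging gives $\eta_k^{-3/2}\|x^{k+1}-x^k\|^3 \ge \mu_{\eta_k}(x^{k+1}) - \eta_k^{3/2}\|M_k-\nabla^2 f(x^k)\|_F^3 - \|g^k-\nabla f(x^k)\|^{3/2}$. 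Multiplying through by $\tfrac{1}{18}\eta_k^{1/2}$ and substituting, the resurrected $\tfrac{1}{18}\eta_k^2\|M_k-\nabla^2 f(x^k)\|_F^3$ term must be absorbed back into $p_k$ (this is why the constant $433/18$ rather than $24$ appears in \eqref{pkpm}: $24+1/18\le 433/18$ with margin, verifying consistency), and the gradient-error term $\tfrac{1}{18}\eta_k^{1/2}\|g^k-\nabla f(x^k)\|^{3/2}$ merges with the existing $3\eta_k^{1/2}\|g^k-\nabla f(x^k)\|^{3/2}$ to yield the coefficient $3+\tfrac{1}{18}=\tfrac{55}{18}$ appearing in \eqref{eq:pk-pm}. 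The main obstacle is the careful bookkeeping of how much of the $\|M_k-\nabla^2 f(x^k)\|_F^3$ budget is consumed: one must verify that the total coefficient $24\eta_k^2 + p_{k+1}(1-\theta_k) + \tfrac{1}{18}\eta_k^2$ does not exceed $p_k$, which is precisely what the constraint $\tfrac{433\eta_k^2}{18}+(1-\theta_k)p_{k+1}\le p_k$ encodes, so I would double-check that $24+1/18 = 433/18$ and that no term is double-counted before concluding.
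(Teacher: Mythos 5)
Your proposal is correct and follows essentially the same route as the paper's proof: Lemma \ref{lem:ppt-desc} plus $p_{k+1}$ times Lemma \ref{lem:rec-Hes-pm}, with $p_{k+1}=\theta_k^2/(378L_F^3\eta_k)$ leaving a spare $-\tfrac{1}{18\eta_k}\|x^{k+1}-x^k\|^3$ that is converted to $-\tfrac{1}{18}\eta_k^{1/2}\mu_{\eta_k}(x^{k+1})$ via \eqref{ineq:mu}, and the resulting $\eta_k^2/18$ and $\eta_k^{1/2}/18$ surcharges absorbed into the $433/18$ and $55/18$ constants exactly as you compute. The only nitpick is your parenthetical ``$433/18>24$ giving room'': the budget is tight, $24+1/18=433/18$ exactly, as your final sentence correctly verifies.
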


\begin{proof}
Fix any $k\ge0$. Notice that $\eta_k\in(0,(2L)^{-1})$. It follows from  \eqref{ineq:mu} and \eqref{lem:descent} with $(x^+,x,M,\eta)=(x^{k+1},x^k,M_k,\eta_k)$ that 
\begin{align}
&\mu_{\eta_k}(x^{k+1}) \le \eta_k^{-3/2}\|x^{k+1} - x^k\|^3 + \eta_k^{3/2}\|M_k - \nabla^2 f(x^k)\|_F^3 + \|g^k - \nabla f(x^k)\|^{3/2},\label{two-consec-pm}\\
&f(x^{k+1}) \le f(x^k) - (9\eta_k)^{-1}\|x^{k+1}-x^k\|^3 + 24\eta_{k}^2\|\nabla^2 f(x^k)-M_k\|_{F}^3 + 3\eta_{k}^{1/2} \|\nabla f(x^k)-g^k\|^{3/2}.\label{func-desc-pm}
\end{align}
Combining these with \eqref{def:pot} and \eqref{hessian-vr} , we obtain that
\begin{align*}
&\E_{\xi^{k+1}}[\mathcal{P}_{k+1}]  \overset{\eqref{def:pot}}{=} \E_{\xi^{k+1}}[f(x^{k+1}) + p_{k+1}\|M_{k+1}-\nabla^2 f(x^{k+1})\|_F^3] \\
& \overset{\eqref{hessian-vr}\eqref{func-desc-pm}}{\le } f(x^k) - ((9\eta_k)^{-1}-21L_{F}^3\theta_k^{-2}p_{k+1})\|x^{k+1}-x^k\|^3 \\
&\qquad + (24\eta_k^2 + (1-\theta_k)p_{k+1})\|M_k-\nabla^2f(x^k)\|_F^3 + 3\eta_{k}^{1/2}\|g^k-\nabla f(x^k)\|^{3/2} + 5 \sigma^3\theta_k^{5/2}p_{k+1}\\
& \overset{\eqref{two-consec-pm}}{\le } f(x^k) - \eta_k^{3/2}((9\eta_k)^{-1}-21L_{F}^3\theta_k^{-2}p_{k+1})\mu_{\eta_{k}}(x^{k+1})\\
& \qquad + ( \eta_k^3((9\eta_k)^{-1}-21L_{F}^3\theta_k^{-2}p_{k+1}) + 24\eta_k^2 
+ (1-\theta_k)p_{k+1})\|M_k-\nabla^2 f(x^k)\|_F^3 \\
& \qquad +  (3\eta_{k}^{1/2} + \eta_k^{3/2}((9\eta_k)^{-1}-21L_{F}^3\theta_k^{-2}p_{k+1}))\|g^k - \nabla f(x^k)\|^{3/2} + 5\sigma^3\theta_k^{5/2}p_{k+1}\\
& = f(x^k) - \eta_k^{1/2}\mu_{\eta_{k}}(x^{k+1})/18 + (433\eta_k^2/{18} 
+ (1-\theta_k)p_{k+1})\|M_k-\nabla^2 f(x^k)\|_F^3 \\
& \qquad + 55\eta_{k}^{1/2}\|g^k - \nabla f(x^k)\|^{3/2}/18 + 5\sigma^3\theta_k^{5/2}p_{k+1}\\
& \overset{\eqref{def:pot}}{\le} \mathcal{P}_k - \eta_k^{1/2}\mu_{\eta_{k}}(x^{k+1})/18 + 55\eta_{k}^{1/2}\|g^k - \nabla f(x^k)\|^{3/2}/18 + 5\sigma^3\theta_k^{5/2}p_{k+1},
\end{align*}
where the second equality is due to $p_{k+1}= \theta_k^2/(378L_{F}^3\eta_k)$, and the last inequality follows from \eqref{def:pot} and $433\eta_k^2/18
+ (1-\theta_k)p_{k+1}\le p_k$. The conclusion \eqref{eq:pk-pm} then follows from the above inequality.
\end{proof}

We are now ready to prove Theorem \ref{th:complexity-pm-c}.

\begin{proof}[{Proof of Theorem \ref{th:complexity-pm-c}}]
For convenience, let $\eta=1/(9K^{2/7})$. Then, we have $\eta_k=\eta$, $\theta_k=21L_F\eta$, and $\delta_k=9\eta^{2}$ for all $k\ge0$. Also, we define $p_k = 7\eta/(6L_F)$ for all $k\ge0$. Then, one can verify that \eqref{pkpm} holds for $\{(\eta_k,\theta_k,\delta_k)\}$ defined in \eqref{c-para-pm} and $\{p_k\}$ defined above. In addition, by \eqref{c-para-pm}, one has that $\{\eta_k\}\subset(0,(2L)^{-1})$ and $\{\theta_k\}\subset(0,1)$ holds for all $K\ge \max\{(2L/9)^{7/2},(7L_F/3)^{7/2},1\}$, Thus, Lemma \ref{th:potential-pm} holds for $\{(\eta_k,\theta_k,\delta_k)\}$ defined in \eqref{c-para-pm} and $\{p_k\}$ defined above. By the definition of $\{p_k\}$, $M_0=H(x^0;\xi^0)$, \eqref{ineq:G-var} and \eqref{ineq:H-Variance}, one has
\begin{align}
&\E[\mathcal{P}_0]=f(x^0) + p_0\E[\|M_0-\nabla^2 f(x^0)\|_F^3] \le f(x^0) + p_0\sigma^3 = f(x^0) + 7\eta\sigma^3/(6L_{F}),\label{ineq:upbd-exp0}\\
&\E[\mathcal{P}_K] = \E[f(x^K)+p_K\|M_K-\nabla^2 f(x^K)\|_F^3] \ge f_{\mathrm{low}}.\label{ineq:upbd-expK}
\end{align}
Notice that $\E_{\zeta^k}[\|g^k-\nabla f(x^k)\|^{3/2}]\le \delta_k^{3/2}$. Taking expectation of both sides of \eqref{eq:pk-pm} with respect to $\{\xi^i\}_{i=0}^{k+1}$ and $\{\zeta^i\}_{i=0}^k$, and substituting $\eta_k=\eta$, $\theta_k=21L_F\eta$, $\delta_k=9\eta^2$ and $p_k = 7\eta/(6L_F)$, we obtain that for all $k\ge0$,
\begin{align*}
\E[\mathcal{P}_{k+1}] \le \E[\mathcal{P}_{k}] - \eta^{1/2}\E[\mu_{\eta}(x^{k+1})]/18  + (11789\sigma^3L_{F}^{3/2}+83)\eta^{7/2}.
\end{align*}
Summing up this inequality over $k=0,\ldots,K-1$, and using \eqref{ineq:upbd-exp0} and \eqref{ineq:upbd-expK}, we can see that for all $K\ge \max\{(2L/9)^{7/2},(7L_F/3)^{7/2},1\}$,
\begin{align*}
f_{\mathrm{low}} & \overset{\eqref{ineq:upbd-exp0}}{\le}  \E[\mathcal{P}_K] \le    \E[\mathcal{P}_{0}] - (\eta^{1/2}/18)\sum_{k=0}^{K-1}\E[\mu_{\eta}(x^{k+1})] + (11789\sigma^3L_{F}^{3/2}+83)K\eta^{7/2}\\
&\overset{\eqref{ineq:upbd-expK}}{\le} f(x^0) + 7\eta\sigma^3/(6L_{F}) - (\eta^{1/2}/18)\sum_{k=0}^{K-1}\E[\mu_{\eta}(x^{k+1})] + (11789\sigma^3L_{F}^{3/2}+83)K\eta^{7/2}.
\end{align*}
Rearranging the terms of this inequality and using $\eta=1/(9K^{2/7})$, we obtain the following holds for all $K\ge \max\{(2L/9)^{7/2},(7L_F/3)^{7/2},1\}$,
\begin{align*}
\frac{1}{K}\sum_{k=0}^{K-1}\E[\mu_{\eta}(x^{k+1})] & \le 18\Big(\frac{f(x^0)-f_{\mathrm{low}} + 7\eta\sigma^3/(6L_{F})}{K\eta^{1/2}}+ (11789\sigma^3L_{F}^{3/2}+83)\eta^3\Big) \\
&\le 54(f(x^0)-f_{\mathrm{low}} + \sigma^3/(L_{F}^2) + L_{F}^{3/2}\sigma^3 + 1){K^{-6/7}} \overset{\eqref{def:Mc}}{=} M_{\mathrm{pm}} K^{-6/7}.
\end{align*}
Recall that $\iota_K$ is uniformly drawn from $\{1,\ldots,K\}$. This along with the above inequality implies that for all $K\ge \max\{(2L/9)^{7/2},(7L_F/3)^{7/2},1\}$,
\begin{align*}
\E[\mu_{\eta}(x^{\iota_K})] =  \frac{1}{K}\sum_{k=0}^{K-1}\E[\mu_{\eta}(x^{k+1})] \le M_{\mathrm{pm}}  K^{-6/7},    
\end{align*}
which along with the definition of $\mu_\eta$ in \eqref{eq:mu_eta} and the fact that $\eta=1/(9K^{2/7})$ implies the following holds for all $K\ge \max\{(2L/9)^{7/2},(7L_F/3)^{7/2},1\}$,
\begin{align*}
\E[\|\nabla f(x^{\iota_K})\|^{3/2}] \le 3M_{\mathrm{pm}}K^{-6/7},\quad \E[\lambda_{\min}(\nabla f(x^{\iota_K}))^3] \ge - 4M_{\mathrm{pm}}K^{-6/7}\eta^{-3/2} = -108M_{\mathrm{pm}}K^{-3/7}.
\end{align*}
In view of this, we can see that $x^{\iota_K}$ is an $(\epsilon_g,\epsilon_H)$-SSOSP of \eqref{ucpb} for all $K$ satisfying \eqref{stat-fix-pm}. Hence, the conclusion of this theorem holds as desired.
\end{proof}

\subsection{Proof of the main results in Section \ref{sec:scn-rm}}\label{subsec:proof-rm}

In this subsection, we present some technical lemmas and then use them to prove Theorem \ref{th:complexity-rm-c}. The following lemma gives the recurrence for the estimation error of the Hessian estimators $\{M_k\}$ generated by Algorithm \ref{alg:unf-ssom-rm}. 

\begin{lemma}\label{lem:rec-Hes-rm}
Suppose that Assumptions \ref{asp:basic} and \ref{asp:mean-squared} hold. Let $\{(x^k,M_k)\}$ be the sequence generated by Algorithm \ref{alg:unf-ssom-rm} with momentum parameters $\{\theta_k\}$. Then, it holds that for all $k\ge0$,
\begin{align}
\E_{\xi^{k+1}}[\|M^{k+1}-\nabla^2 f(x^{k+1})\|_F^3] \le (1-\theta_k)\|M_k-\nabla^2 f(x^k)\|_F^3 + 36(L_F^3 + L_H^3)\theta_k^{-1/2}\|x^{k+1}-x^k\|^3 + 36\theta_k^{5/2}\sigma^3, \label{hessian-vr-rm} 
\end{align}
where $L_F$ and $\sigma$ are given in Assumption \ref{asp:basic}, and $L_H$ is given in Assumption \ref{asp:mean-squared}.
\end{lemma}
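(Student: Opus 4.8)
The plan is to mirror the structure of the proof of Lemma \ref{lem:rec-Hes-pm}, adapting it to the recursive (variance-reduced) update \eqref{update-Mk-rm}. First I would isolate the error $M_{k+1}-\nabla^2 f(x^{k+1})$ into a contraction part plus a conditionally mean-zero part. Writing out \eqref{update-Mk-rm} and adding and subtracting $\nabla^2 f(x^k)$ and $\nabla^2 f(x^{k+1})$, one obtains
\begin{align*}
M_{k+1}-\nabla^2 f(x^{k+1}) = (1-\theta_k)(M_k-\nabla^2 f(x^k)) + S_k,
\end{align*}
where
\begin{align*}
S_k := \theta_k(H(x^{k+1};\xi^{k+1})-\nabla^2 f(x^{k+1})) + (1-\theta_k)\big[(H(x^{k+1};\xi^{k+1})-H(x^k;\xi^{k+1})) - (\nabla^2 f(x^{k+1})-\nabla^2 f(x^k))\big].
\end{align*}
The crucial point is that, conditioned on the past (so that $x^k$, $x^{k+1}$, and $M_k$ are fixed), $\E_{\xi^{k+1}}[S_k]=0$: the first summand is centered since $H$ is unbiased (Assumption \ref{asp:basic}(d)), and the second is centered since $\E_{\xi^{k+1}}[H(x^{k+1};\xi^{k+1})-H(x^k;\xi^{k+1})]=\nabla^2 f(x^{k+1})-\nabla^2 f(x^k)$ by unbiasedness at both points. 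Note that, unlike the Polyak case, the $\nabla^2 f$ difference is absorbed into $S_k$ rather than left as a deterministic bias term.

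Next I would apply \eqref{cubic-function-1} with $U=(1-\theta_k)(M_k-\nabla^2 f(x^k))$ (deterministic given the past) and $V=S_k$. Taking $\E_{\xi^{k+1}}$ kills the cross term $3\|U\|_F\mathrm{Tr}(U^TV)$ because $\E_{\xi^{k+1}}[S_k]=0$, yielding
\begin{align*}
\E_{\xi^{k+1}}[\|M_{k+1}-\nabla^2 f(x^{k+1})\|_F^3] \le (1+c)(1-\theta_k)^3\|M_k-\nabla^2 f(x^k)\|_F^3 + 2(1+c^{-1/2})\E_{\xi^{k+1}}[\|S_k\|_F^3].
\end{align*}
Since $U$ contains no $\nabla^2 f$ difference here, no second expansion of $\|U\|_F^3$ is needed. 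Choosing $c=\theta_k/(2(1-\theta_k))$ makes $(1+c)(1-\theta_k)^3=(1-\theta_k/2)(1-\theta_k)^2\le 1-\theta_k$ and gives $c^{-1/2}\le\sqrt2\,\theta_k^{-1/2}$, producing the desired contraction factor $(1-\theta_k)$.

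It then remains to bound $\E_{\xi^{k+1}}[\|S_k\|_F^3]$. I would split $S_k$ into its two summands (using \eqref{cubic-function-2} or the crude $\|A+B\|_F^3\le 4\|A\|_F^3+4\|B\|_F^3$). For the first summand, $\E_{\xi^{k+1}}[\|H(x^{k+1};\xi^{k+1})-\nabla^2 f(x^{k+1})\|_F^3]\le\sigma^3$ from Assumption \ref{asp:basic}(d), contributing a $\theta_k^3\sigma^3$-order term. For the second summand I would center and bound $\|D_1-\E D_1\|_F^3\le 4\|D_1\|_F^3+4\|\E D_1\|_F^3$ with $D_1=H(x^{k+1};\xi^{k+1})-H(x^k;\xi^{k+1})$; here the key is that the \emph{same} sample $\xi^{k+1}$ is used at both $x^{k+1}$ and $x^k$, so the mean-cubed smoothness of Assumption \ref{asp:mean-squared} applies directly to give $\E_{\xi^{k+1}}[\|D_1\|_F^3]\le L_H^3\|x^{k+1}-x^k\|^3$, while $\|\E D_1\|_F=\|\nabla^2 f(x^{k+1})-\nabla^2 f(x^k)\|_F\le L_F\|x^{k+1}-x^k\|$ by Assumption \ref{asp:basic}(b). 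This yields a term of order $(L_F^3+L_H^3)\|x^{k+1}-x^k\|^3$.

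Combining these bounds, substituting $c=\theta_k/(2(1-\theta_k))$, and using $\theta_k\in(0,1)$ to absorb powers of $\theta_k$ (in particular $(1-\theta_k)^3\le 1$, $\theta_k^{-1/2}\ge 1$, and $\theta_k^3\le\theta_k^{5/2}$) should produce the stated constants $36(L_F^3+L_H^3)\theta_k^{-1/2}$ and $36\sigma^3\theta_k^{5/2}$. The main obstacle I anticipate is the bookkeeping needed to land the numerical constants within $36$: one must choose the auxiliary Young parameters in the split of $\|S_k\|_F^3$ so that the $\theta_k^{-1/2}$ inflation coming from $2(1+c^{-1/2})$ multiplies a sufficiently small constant, and one must keep track of the fact that the reused-sample structure is precisely what makes Assumption \ref{asp:mean-squared} applicable (without it, the finite-difference term could not be controlled by $\|x^{k+1}-x^k\|^3$).
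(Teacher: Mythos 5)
Your proposal follows essentially the same route as the paper's proof: the identical decomposition of $M_{k+1}-\nabla^2 f(x^{k+1})$ into the contracting term $(1-\theta_k)(M_k-\nabla^2 f(x^k))$ plus a conditionally mean-zero correction $S_k$, the cross term in \eqref{cubic-function-1} killed by unbiasedness, a choice $c\asymp\theta_k$, and the observation that reusing the sample $\xi^{k+1}$ at both $x^k$ and $x^{k+1}$ is exactly what makes Assumption \ref{asp:mean-squared} applicable to the finite-difference term. The only divergence is the final bookkeeping you flag yourself: your nested binary splits of $\|S_k\|_F^3$ (accumulating a factor $4\times 4=16$) together with $c=\theta_k/(2(1-\theta_k))$ give a coefficient of roughly $2(1+\sqrt{2})\cdot 16\approx 77$ on the $(L_F^3+L_H^3)\|x^{k+1}-x^k\|^3$ term rather than $36$, whereas the paper regroups $S_k$ as a sum of three matrices and applies $\|A+B+C\|_F^3\le 9(\|A\|_F^3+\|B\|_F^3+\|C\|_F^3)$ once, with $c=\theta_k/(1-\theta_k)$, so that $2\cdot 9\cdot(1+\theta_k^{-1/2})\le 36\,\theta_k^{-1/2}$ lands exactly on the stated constants.
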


\begin{proof}
Fix any $k\ge0$. It follows from \eqref{update-Mk-rm} that 
\begin{align}
 M_{k+1} - \nabla^2 f(x^{k+1}) &  \overset{\eqref{update-Mk-rm}}{=} (1-\theta_k)(M_k - \nabla^2 f(x^k)) + H(x^{k+1};\xi^{k+1}) - \nabla^2 f(x^{k+1})\nonumber\\
 &\quad \ + (1-\theta_k)(\nabla^2 f(x^k) - H(x^k;\xi^{k+1}))\label{a-identity-rm}
\end{align}
Observe from Assumptions \ref{asp:basic} and \ref{asp:mean-squared} that $\|\nabla^2 f(x^{k+1}) - \nabla^2 f(x^k)\|_F\le L_F\|x^{k+1}-x^k\|$, $\E_{\xi^{k+1}}[H(x^{k+1};\xi^{k+1})]=\nabla^2 f(x^{k+1})$, $\E_{\xi^{k+1}}[\|H(x^{k+1};\xi^{k+1}) -\nabla^2 f(x^{k+1})\|^3_F]\le\sigma^3$, and $\E_{\xi^{k+1}}[\|H(x^{k+1};\xi^{k+1}) -H(x^{k};\xi^{k+1})\|^3_F]\le L_H^3\|x^{k+1} - x^k\|_F^3$. Using these, \eqref{cubic-function-1}, and \eqref{a-identity-rm}, we obtain that 
\begin{align}
&\E_{\xi^{k+1}}[\|M_{k+1} - \nabla^2 f(x^{k+1})\|_F^3] \nonumber\\
&\overset{\eqref{a-identity-rm}}{=}\E_{\xi^{k+1}}[\|(1-\theta_k)(M_k - \nabla^2 f(x^k)) + H(x^{k+1};\xi^{k+1}) - \nabla^2 f(x^{k+1})+ (1-\theta_k)(\nabla^2 f(x^k) - H(x^k;\xi^{k+1}))\|_F^3]\nonumber\\
& \overset{\eqref{cubic-function-1}}{\le}(1+c)\|(1-\theta_k)(M_k - \nabla^2 f(x^k))\|_F^3 \nonumber \\
& \qquad + 2(1 + c^{-1/2})\E_{\xi^{k+1}}\|H(x^{k+1};\xi^{k+1}) - \nabla^2 f(x^{k+1}) + (1-\theta_k)(\nabla^2 f(x^k) - H(x^k;\xi^{k+1}))\|_F^3\nonumber\\
& = (1+c)(1-\theta_k)^3\|M_k-\nabla^2 f(x^k)\|_F^3 + 2(1 + c^{-1/2})\E_{\xi^{k+1}}\|H(x^{k+1};\xi^{k+1})-H(x^{k};\xi^{k+1}) \nonumber\\ 
& \qquad + \nabla^2 f(x^k) - \nabla^2 f(x^{k+1}) -\theta_k(\nabla^2 f(x^k) - H(x^k;\xi^{k+1}))\|_F^3 \nonumber\\
& \le (1+c)(1-\theta_k)^3\|M_k-\nabla^2 f(x^k)\|_F^3 + 18(1 + c^{-1/2})\E_{\xi^{k+1}}[\|H(x^{k+1};\xi^{k+1})-H(x^{k};\xi^{k+1})\|^3_F] \nonumber \\ 
& \quad + 18(1 + c^{-1/2})\|\nabla^2 f(x^k) - \nabla^2 f(x^{k+1})\|_F^3 + 18(1 + c^{-1/2})\theta_k^3\E_{\xi^{k+1}}[\|\nabla^2 f(x^k) - H(x^k;\xi^{k+1})\|_F^3]\nonumber\\
& \le (1+c)(1-\theta_k)^3\|M_k-\nabla^2 f(x^k)\|_F^3 + 18(1 + c^{-1/2})(L_F^3+ L_H^3)\|x^k-x^{k+1}\|^3 + 18\sigma^3(1 + c^{-1/2})\theta_k^3, \label{a-ineq-rm}
\end{align} 
where the first inequality follows from \eqref{cubic-function-1} and $\mathbb{E}_{\xi^{k+1}}[H(x^{k+1};\xi^{k+1})] = \nabla^2 f(x^{k+1})$, the second inequality is due to $\|A+B+C\|_F^3\le 9(\|A\|_F^3+\|B\|_F^3+\|C\|_F^3)$ for all $A,B,C\in\R^{n\times n}$, and the last inequality follows from $\|\nabla^2 f(x^{k+1}) - \nabla^2 f(x^k)\|_F\le L_F\|x^{k+1}-x^k\|$, $\E_{\xi^{k+1}}[\|H(x^{k+1};\xi^{k+1}) -\nabla^2 f(x^{k+1})\|^3_F]\le\sigma^3$, and $\E_{\xi^{k+1}}[\|H(x^{k+1};\xi^{k+1}) -H(x^{k};\xi^{k+1})\|^3_F]\le L_H^3\|x^{k+1} - x^k\|_F^3$.

Letting $c = \theta_k/(1-\theta_k)$ in \eqref{a-ineq-rm}, and using $\theta_{k}\in(0,1)$, we obtain $c^{-1/2}=(1-\theta_k)^{1/2}\theta_k^{-1/2} \le\theta_k^{-1/2}$. Combining this with \eqref{a-ineq-rm}, we obtain that 
\begin{align*}
\E_{\xi^{k+1}}[\|M_{k+1} - \nabla^2 f(x^{k+1})\|_F^3]&\le (1-\theta_k)^2\|M_k-\nabla^2 f(x^k)\|_F^3 \\
&\qquad + {18(L_F^3 + L_H^3)}{(1+\theta_k^{-1/2})}\|x^{k+1}-x^k\|^3 +  18\sigma^3(1+\theta_k^{-1/2})\theta_k^3,    
\end{align*}
which along with $\theta_k\in(0,1)$ implies that \eqref{hessian-vr-rm} holds as desired.
\end{proof}

The following lemma establishes a descent property for the potential sequence $\{\mathcal{P}_k\}$ defined below.

\begin{lemma}
\label{th:potential-rm}
Suppose that Assumptions \ref{asp:basic} and \ref{asp:mean-squared} hold. Let $\{(x^{k},M_k)\}$ be the sequence generated by Algorithm \ref{alg:unf-ssom-rm} with input parameters $\{(\eta_k,\theta_k)\}$. Assume that $\{\eta_k\}\subset(0,(2L)^{-1})$ and $\{\theta_k\}\subset(0,1)$, where $L$ is given in Assumption \ref{asp:basic}(b). Let $\{\mathcal{P}_k\}$ be defined in \eqref{def:pot} for $\{(x^k,M_k)\}$ and any positive sequence $\{p_k\}$ satisfying
\begin{align}\label{pkrm}
p_{k+1}  = \frac{\theta_k^{1/2}}{648(L_F^3 + L_H^3)\eta_k},\quad
\frac{433\eta_{k}^2}{18} + (1-\theta_k)p_{k+1} \le p_{k} \quad \forall k\ge 0,
\end{align}
where $L_{F}$ is given in Assumption \ref{asp:basic}(b) and $L_H$ is given in Assumption \ref{asp:mean-squared}. Then, it holds that 
\begin{align}\label{eq:pk-rm}
\E_{\xi^{k+1}}[\mathcal{P}_{k+1}]  \le \mathcal{P}_k - \eta_k^{1/2}\mu_{\eta_{k}}(x^{k+1})/18 + 55\eta_{k}^{1/2}\|g^k-\nabla f(x^k)\|^{3/2}/18 + 36\theta_k^{5/2}p_{k+1}\sigma^3  \quad \forall k\ge0,
\end{align}
where $\sigma$ is given in Assumption \ref{asp:basic}, and $\mu_{\eta}(x)$ is defined in \eqref{eq:mu_eta}.
\end{lemma}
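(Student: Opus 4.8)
The plan is to mirror the argument used for the Polyak-momentum potential in Lemma \ref{th:potential-pm}, replacing the Hessian-error recurrence \eqref{hessian-vr} by its recursive-momentum counterpart \eqref{hessian-vr-rm}; once Lemma \ref{lem:rec-Hes-rm} is in hand, the remaining structure is essentially identical. First I would fix $k\ge0$, recall that $\eta_k\in(0,(2L)^{-1})$, and invoke Lemma \ref{lem:bound-fsp} and Lemma \ref{lem:ppt-desc} with $(x^+,x,M,\eta)=(x^{k+1},x^k,M_k,\eta_k)$. This produces the stationarity bound for $\mu_{\eta_k}(x^{k+1})$ coming from \eqref{ineq:mu} and the function-value descent for $f(x^{k+1})$ coming from \eqref{lem:descent}, exactly as in \eqref{two-consec-pm} and \eqref{func-desc-pm}.

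Next I would expand $\E_{\xi^{k+1}}[\mathcal{P}_{k+1}] = \E_{\xi^{k+1}}[f(x^{k+1}) + p_{k+1}\|M_{k+1}-\nabla^2 f(x^{k+1})\|_F^3]$ via the definition \eqref{def:pot}, then substitute the descent \eqref{lem:descent} and the recursive-momentum recurrence \eqref{hessian-vr-rm}. The crucial cancellation comes from the choice $p_{k+1}=\theta_k^{1/2}/(648(L_F^3+L_H^3)\eta_k)$ in \eqref{pkrm}: it makes the step term $36(L_F^3+L_H^3)\theta_k^{-1/2}p_{k+1}=1/(18\eta_k)$, so that the net coefficient of $\|x^{k+1}-x^k\|^3$ equals $(9\eta_k)^{-1}-(18\eta_k)^{-1}=(18\eta_k)^{-1}$, matching the Polyak case.

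I would then use the $\mu_{\eta_k}$-bound to trade this negative cubic-step term for a negative $\mu_{\eta_k}(x^{k+1})$ term: rearranging \eqref{ineq:mu} gives $\|x^{k+1}-x^k\|^3\ge\eta_k^{3/2}\mu_{\eta_k}(x^{k+1})-\eta_k^3\|M_k-\nabla^2 f(x^k)\|_F^3-\eta_k^{3/2}\|g^k-\nabla f(x^k)\|^{3/2}$, and multiplying through by $-(18\eta_k)^{-1}$ produces a $-\eta_k^{1/2}\mu_{\eta_k}(x^{k+1})/18$ contribution plus residual multiples of $\|M_k-\nabla^2 f(x^k)\|_F^3$ and $\|g^k-\nabla f(x^k)\|^{3/2}$. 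Collecting terms, the coefficient of $\|M_k-\nabla^2 f(x^k)\|_F^3$ becomes $433\eta_k^2/18+(1-\theta_k)p_{k+1}$ (using $\eta_k^2/18+24\eta_k^2=433\eta_k^2/18$) and that of $\|g^k-\nabla f(x^k)\|^{3/2}$ becomes $55\eta_k^{1/2}/18$ (using $\eta_k^{1/2}/18+3\eta_k^{1/2}=55\eta_k^{1/2}/18$), while the residual stochastic constant is $36\sigma^3\theta_k^{5/2}p_{k+1}$. The second inequality in \eqref{pkrm} then bounds $433\eta_k^2/18+(1-\theta_k)p_{k+1}\le p_k$, and folding the $\|M_k-\nabla^2 f(x^k)\|_F^3$ term back into $f(x^k)$ through \eqref{def:pot} recovers $\mathcal{P}_k$, yielding \eqref{eq:pk-rm}.

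The only genuinely new ingredient is the recurrence \eqref{hessian-vr-rm}, whose derivation in Lemma \ref{lem:rec-Hes-rm} relies on the mean-cubed smoothness in Assumption \ref{asp:mean-squared}; given that lemma, the present argument is pure bookkeeping. I expect its one delicate point to be verifying that the constant $648$ in the definition of $p_{k+1}$ is precisely what is needed to give the clean cancellation $36/648=1/18$, and hence the same $(18\eta_k)^{-1}$ coefficient and the same $433/18$ and $55/18$ constants as in the Polyak case. Beyond carefully tracking these numerical constants, I anticipate no real obstacle.
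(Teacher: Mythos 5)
Your proposal matches the paper's proof essentially step for step: the same invocation of \eqref{ineq:mu} and \eqref{lem:descent}, the same substitution of \eqref{hessian-vr-rm} into the expanded potential, the same cancellation $36(L_F^3+L_H^3)\theta_k^{-1/2}p_{k+1}=(18\eta_k)^{-1}$ from the choice of $p_{k+1}$, and the same bookkeeping yielding the $433\eta_k^2/18$, $55\eta_k^{1/2}/18$, and $36\sigma^3\theta_k^{5/2}p_{k+1}$ constants before folding back into $\mathcal{P}_k$ via the second condition in \eqref{pkrm}. The argument and all numerical constants are correct.
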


\begin{proof}
Fix any $k\ge0$. Notice that $\eta_k\in(0,(2L)^{-1})$. It follows from \eqref{ineq:mu} and \eqref{lem:descent} with $(x^+,x,M,\eta)=(x^{k+1},x^k,M_k,\eta_k)$ that
\begin{align}
&\mu_{\eta_k}(x^{k+1}) \le \eta_k^{-3/2}\|x^{k+1} - x^k\|^3 + \eta_k^{3/2}\|M_k - \nabla^2 f(x^k)\|_F^3 + \|g^k - \nabla f(x^k)\|^{3/2},\label{two-consec-rm}\\
&f(x^{k+1}) \le f(x^k) - (9\eta_k)^{-1}\|x^{k+1}-x^k\|^3 + 24\eta_{k}^2\|\nabla^2 f(x^k)-M_k\|_{F}^3 + 3\eta_{k}^{1/2} \|\nabla f(x^k)-g^k\|^{3/2}.\label{func-desc-rm}
\end{align}
Combining these with \eqref{def:pot} and \eqref{hessian-vr-rm}, we obtain that
\begin{align*}
&\E_{\xi^{k+1}}[\mathcal{P}_{k+1}]  \overset{\eqref{def:pot}}{=} \E_{\xi^{k+1}}[f(x^{k+1}) + p_{k+1}\|M_{k+1}-\nabla^2 f(x^{k+1})\|_F^3] \\
& \overset{\eqref{hessian-vr-rm}\eqref{func-desc-rm}}{\le } f(x^k) - ((9\eta_k)^{-1}-36(L_F^3 + L_H^3)\theta_k^{-1/2}p_{k+1})\|x^{k+1}-x^k\|^3 \\
&\qquad + (24\eta_k^2 + (1-\theta_k)p_{k+1})\|M_k-\nabla^2f(x^k)\|_F^3 + 3\eta_{k}^{1/2}\|g^k-\nabla f(x^k)\|^{3/2} + 36\sigma^3\theta_k^{5/2}p_{k+1} \\
& \overset{\eqref{two-consec-rm}}{\le} f(x^k) - \eta_k^{3/2}((9\eta_k)^{-1}-36(L_F^3 + L_H^3)\theta_k^{-1/2}p_{k+1})\mu_{\eta_{k}}(x^{k+1})\\
& \quad + ( \eta_k^3((9\eta_k)^{-1}-36(L_F^3 + L_H^3)\theta_k^{-1/2}p_{k+1}) + 24\eta_k^2 
+ (1-\theta_k)p_{k+1})\|M_k-\nabla^2 f(x^k)\|_F^3\\
& \qquad + (3\eta_{k}^{1/2}+\eta_k^{3/2}((9\eta_k)^{-1}-36(L_F^3 + L_H^3)\theta_k^{-1/2}p_{k+1}))\|g^k-\nabla f(x^k)\|^{3/2} + 36\sigma^3\theta_k^{5/2}p_{k+1} \\
& = f(x^k) - \eta_k^{1/2}\mu_{\eta_{k}}(x^{k+1})/18 + (433\eta_k^2/{18} 
+ (1-\theta_k)p_{k+1})\|M_k-\nabla^2 f(x^k)\|_F^3\\
& \qquad + 55\eta_{k}^{1/2}\|g^k-\nabla f(x^k)\|^{3/2}/18 + 36\sigma^3\theta_k^{5/2}p_{k+1}\\
& \overset{\eqref{def:pot}}{\le} \mathcal{P}_k - \eta_k^{1/2}\mu_{\eta_{k}}(x^{k+1})/18 + 55\eta_{k}^{1/2}\|g^k-\nabla f(x^k)\|^{3/2}/18 + 36\sigma^3\theta_k^{5/2}p_{k+1},
\end{align*}
where the second equality is due to $p_{k+1}= \theta_k^{1/2}/(648(L_{F}^3+L_{H}^3)\eta_k)$, and the last inequality follows from \eqref{def:pot} and $433\eta_k^2/18
+ (1-\theta_k)p_{k+1}\le p_k$. The conclusion \eqref{eq:pk-rm} then follows from the above inequality.
\end{proof}

We now provide a proof of Theorem \ref{th:complexity-rm-c}.

\begin{proof}[{Proof of Theorem \ref{th:complexity-rm-c}}]
For convenience, let $\eta=1/(17K^{1/5})$. Then, we have $\eta_k=\eta$, $\theta_k=625(L_{F}^3 + L_H^3)^{2/3}\eta^2$ and $\delta_k=289\eta^3$ for all $k\ge0$. In addition, we define $p_k = 625^{1/2}/(648(L_{F}^3 + L_H^3)^{2/3})$ for all $k\ge0$. Then, one can verify that \eqref{pkrm} holds for $\{(\eta_k,\theta_k,\delta_k)\}$ defined in \eqref{c-para-rm} and $\{p_k\}$ defined above. In addition, by \eqref{c-para-rm}, one has that $\{\eta_k\}\subset(0,(2L)^{-1})$ and $\{\theta_k\}\subset(0,1)$ holds for all $K\ge \max\{(2L/17)^{5}, 7(L_{F}^3 + L_H^3)^{5/3},1\}$, Thus, Lemma \ref{th:potential-rm} holds for $\{(\eta_k,\theta_k,\delta_k)\}$ defined in \eqref{c-para-rm} and $\{p_k\}$ defined above. By the definition of $\{p_k\}$, $M_0=H(x^0;\xi^0)$, and \eqref{ineq:H-Variance}, one has
\begin{align}
&\E[\mathcal{P}_0]=f(x^0) + p_0\E[\|M_0-\nabla^2 f(x^0)\|_F^3]\le f(x^0) + p_0\sigma^3 \le f(x^0) + \sigma^3/(L_{F}^3 + L_H^3)^{2/3},\label{ineq:upbd-exp0-rm}\\
&\E[\mathcal{P}_K] = \E[f(x^K)+p_K\|M_K-\nabla^2 f(x^K)\|_F^3] \ge f_{\mathrm{low}}.\label{ineq:upbd-expK-rm}
\end{align}
Notice that $\E_{\zeta^k}[\|g^k-\nabla f(x^k)\|^{3/2}]\le \delta_k^{3/2}$. Taking expectation of both sides of \eqref{eq:pk-rm} with respect to $\{\xi^i\}_{i=0}^{k+1}$ and $\{\zeta^i\}_{i=0}^k$, and substituting $\eta_k=\eta$, $\theta_k=625(L_{F}^3 + L_H^3)^{2/3}\eta^2$, and $p_k = 625^{1/2}/(648(L_{F}^3 + L_H^3)^{2/3})$, we obtain that for all $k\ge0$,
\begin{align*}
\E[\mathcal{P}_{k+1}] \le \E[\mathcal{P}_{k}] - \eta^{1/2}\E[\mu_{\eta}(x^{k+1})]/18 + \Big(\frac{55}{18}\cdot 17^3 + \frac{36\cdot625^3}{648}\sigma^3(L_{F}^3+L_H^3)\Big)\eta^5.
\end{align*}
Summing this inequality over $k=0,\ldots,K-1$, and using \eqref{ineq:upbd-exp0-rm} and \eqref{ineq:upbd-expK-rm}, it follows that for all $K\ge \max\{(2L/17)^{5},7(L_{F}^3 + L_H^3)^{5/3},1\}$,
\begin{align*}
f_{\mathrm{low}} & \overset{\eqref{ineq:upbd-expK-rm}}{\le}  \E[\mathcal{P}_K] \le \E[\mathcal{P}_{0}] - (\eta^{1/2}/18)\sum_{k=0}^{K-1}\E[\mu_{\eta}(x^{k+1})] + \Big(\frac{55}{18}\cdot 17^3 + \frac{36\cdot625^3}{648}\sigma^3(L_{F}^3+L_H^3)\Big)K\eta^5\\
&\overset{\eqref{ineq:upbd-exp0-rm}}{\le} f(x^0) + \sigma^3/(L_{F}^3 + L_H^3)^{2/3} - (\eta^{1/2}/18)\sum_{k=0}^{K-1}\E[\mu_{\eta}(x^{k+1})] + \Big(\frac{55}{18}\cdot 17^3 + \frac{36\cdot625^3}{648}\sigma^3(L_{F}^3+L_H^3)\Big)K\eta^5.
\end{align*}
Rearranging the terms of this inequality and using the definition of $\eta=1/(17K^{1/5})$, we obtain that for all $K\ge \max\{(2L/17)^{5},7(L_{F}^3 + L_H^3)^{5/3},1\}$,
\begin{align*}
\frac{1}{K}\sum_{k=0}^{K-1}\E[\mu_{\eta}(x^{k+1})] & \le 18\Big(\frac{f(x^0)-f_{\mathrm{low}} + \sigma^3/(L_{F}^3 + L_H^3)^{2/3}}{K\eta^{1/2}}+ \Big(\frac{55}{18}\cdot 17^3 + \frac{36\cdot625^3}{648}\sigma^3(L_{F}^3+L_H^3)\Big)\eta^{9/2}\Big) \\
&\le 75(f(x^0)-f_{\mathrm{low}} + \sigma^3/(L_{F}^3 + L_H^3)^{2/3} + (L_{F}^3+L_H^3)\sigma^3 + 1) K^{-9/10} \overset{\eqref{def:Mct}}{=} M_{\mathrm{rm}} K^{-9/10}.
\end{align*}
Recall that $\iota_K$ is uniformly drawn from $\{1,\ldots,K\}$. This along with the above inequality implies that for all $K\ge \max\{(2L/17)^{5},7(L_{F}^3 + L_H^3)^{5/3},1\}$,
\begin{align*}
\E[\mu_{\eta}(x^{\iota_K})] =  \frac{1}{K}\sum_{k=0}^{K-1}\E[\mu_{\eta}(x^{k+1})] \le M_{\mathrm{rm}} K^{-9/10},    
\end{align*}
which along with the definition of $\mu_\eta$ in \eqref{eq:mu_eta} and $\eta=1/(17K^{1/5})$ implies that for all $K\ge \max\{(2L/17)^{5},7(L_{F}^3 + L_H^3)^{5/3},1\}$,
\begin{align*}
\E[\|\nabla f(x^{\iota_K})\|^{3/2}] \le 3M_{\mathrm{rm}}K^{-9/10},\quad \E[\lambda_{\min}(\nabla f(x^{\iota_K}))^3] \ge - 4M_{\mathrm{rm}}K^{-9/10}\eta^{-3/2} = -281M_{\mathrm{rm}}K^{-3/5}.
\end{align*}
In view of these, we can see that $x^{\iota_K}$ is an $(\epsilon_g,\epsilon_H)$-SSOSP of \eqref{ucpb} for all $K$ satisfying \eqref{stat-fix-rm}. Hence, the conclusion of this theorem holds as desired.
\end{proof}

\section*{Acknowledgment} The work of Yiming Yang was partly supported by Postgraduate Scientific Research Innovation Project of Hunan Province (Grant: CX20230617). The work of Zheng Peng was partly supported by the Major Research Plan of National Natural Science Foundation of China (Grant: 92473208), the Key Program of National Natural Science of China (Grant:12331011), and the Innovative Research Group Project of Natural Science Foundation of Hunan Province (Grant: 2024JJ1008). The work of Xiao Wang was partly supported by National Natural Science Foundation of China (Grant: 12271278). The work of Chuan He was partially supported by the Wallenberg AI, Autonomous Systems and Software Program (WASP) funded by the Knut and
Alice Wallenberg Foundation.

\bibliographystyle{abbrv}
\bibliography{ref}

\end{document}